\documentclass{article}
\usepackage{amsmath,amssymb,latexsym,enumerate}
\usepackage{amsfonts}
\usepackage{theorem}
\usepackage{array}
\usepackage[a4paper]{geometry}
\usepackage{graphics}
\usepackage[usenames]{color}
\usepackage[all]{xypic}
\usepackage{graphicx}
\usepackage{boxedminipage}

\newtheorem{theorem}{Theorem}

\newtheorem{corollary}[theorem]{Corollary}

\newtheorem{lemma}[theorem]{Lemma}
\newtheorem{remark}[theorem]{Remark}

\newtheorem{proposition}[theorem]{Proposition}

\newenvironment{proof}[1][Proof]{\textbf{#1.} }{\ \rule{0.5em}{0.5em}}

\newcommand{\F}{\mathbb{F}}

\newcommand{\Tr}{{\rm{Tr}}}

\begin{document}

\title{Galois Towers over Non-prime Finite Fields}
\author{Alp Bassa\footnote{Alp Bassa is supported by T\"{u}bitak Proj. No. 112T233.}, Peter Beelen\footnote{Peter Beelen gratefully acknowledges the support from the Danish National Research Foundation and the National Science Foundation of China (Grant No.11061130539) for the Danish-Chinese Center for Applications of Algebraic Geometry in Coding Theory and Cryptography.
}, Arnaldo Garcia\footnote{Arnaldo Garcia is partially supported by CNPq (Brazil).} and Henning Stichtenoth\footnote{Henning Stichtenoth is supported by T\"{u}bitak Proj. No. 111T234.}}
\date{\empty}

\maketitle
\begin{abstract}
In this paper we construct Galois towers with good asymptotic properties over any non-prime finite field $\mathbb F_{\ell}$; i.e.,  we construct sequences of function fields $\mathcal{N}=(N_1 \subset N_2 \subset \cdots)$ over $\mathbb F_{\ell}$ of increasing genus, such that all the extensions $N_i/N_1$ are Galois extensions and the number of rational places of these function fields grows linearly with the genus. The limits of the towers satisfy the same lower bounds as the best currently known lower bounds for the Ihara constant for non-prime finite fields. Towers with these properties are important for applications in various fields including coding theory and cryptography.
\end{abstract}

\section{Introduction}

The question of how many rational points a curve over a finite field can have is not only interesting from a purely number-theoretical perspective, but also has become an important question for applications in computer science, coding theory, cryptography and other areas of discrete mathematics.
Curves with many rational points have been successfully applied in the construction of codes, sequences, hash functions, secret sharing and multiparty computation schemes and other combinatorial objects. One of the landmark results in this direction is the work of Tsfasman--Vladut--Zink \cite{TVZ}, where sequences of curves of increasing genus with good asymptotic behavior and a construction of codes from curves with many points due to Goppa are combined to construct codes better than the Gilbert--Varshamov bound. This was a big surprise, as the Gilbert--Varshamov bound had resisted any attempt of improvement for many years.

Although several such sequences of curves with the same good asymptotic behavior exist, some turn out to be more suitable for applications than others. Recent work has shown that various additional properties enjoyed by the curves in some of these sequences turn out to be very beneficial for applications. These additional properties satisfied by the curves in the sequence reflect themselves in further features or better parameters of the objects constructed from them. For instance, Stichtenoth \cite{trans} showed how sequences of curves with many points together with the additional property that each of them is a Galois covering of the first one can be used to construct self-dual and transitive codes attaining the Tsfasman--Vladut--Zink bound. Also, in \cite{CCX} Cascudo, Cramer and Xing showed how, in the construction of arithmetic secret sharing schemes from sequences of curves with many rational points, a better control on the $d$-torsion in the class group of the curves involved leads to better bounds for the constructed schemes (see also \cite{hassewitt}).

With these and similar applications in mind, we construct in this paper over any non-prime finite field $\mathbb F_{\ell}$ sequences of curves with increasing genus and many rational points, such that each curve in the sequence is a Galois covering of the first one. Instead of the geometric language of curves over finite fields, we will use the equivalent language of algebraic function fields with finite constant fields. So, more precisely, over any non-prime finite field $\mathbb F_{\ell}$ we will construct sequences of function fields $\mathcal{N}=(N_1 \subset N_2 \subset \cdots)$ such that for each $i>0$ the extension $N_i/N_1$ is a Galois extension and moreover  $\mathcal{N}$ has a large limit.
For a more precise statement, see Theorem~\ref{thm:collecting} below.

Let $\mathcal G =(G_1 \subset G_2 \subset \cdots)$ be a sequence of functions fields with full constant field $\mathbb F_{\ell}$. Such a sequence is called a tower over $\mathbb F_{\ell}$.  Let $f(x,y) \in \mathbb F_{\ell}[x,y]$. We say that the tower $\mathcal G$ satisfies the equation $f(x,y)=0$ recursively, if for all $i\geq 1$ there exists $x_i\in G_i$ such that
\begin{itemize}
\item $x_1$ is transcendental over $\mathbb F_{\ell}$,
\item $G_i=G_{i-1}(x_i)$ and $f(x_{i-1},x_i)=0$ for $i>1$.
\end{itemize}
Such a tower is simply called a recursive tower. The main ingredients for this paper are the recursive towers that were introduced by the authors in \cite{BBGS}.

For a function field $F$ over $\mathbb{F}_\ell$ we denote by $N(F)$ the number of rational paces and by $g(F)$ its genus.
Let $q$ be a power of a prime $p$, $1 \le k <n$ be integers such that $\gcd(k,n-k)=1$, and let $\ell=q^n$. In \cite{BBGS} we introduced and studied the towers $\mathcal F =(F_1 \subset F_2 \subset \cdots)$ over $\mathbb{F}_{\ell}$ satisfying the recursive equation
\begin{equation}
\label{tow1i}
\frac{y}{x^{q^k}}+\frac{y^q}{x^{q^{k+1}}}+\cdots+\frac{y^{q^{n-k-1}}}{x^{q^{n-1}}}+\frac{y^{q^{n-k}}}{x}+\frac{y^{q^{n-k+1}}}{x^q}+\cdots+\frac{y^{q^{n-1}}}{x^{q^{k-1}}}=1.
\end{equation}
We showed that the limit $$\lambda(\mathcal F):=\lim_{i\to \infty}\frac{N(F_i)}{g(F_i)}$$ of this tower satisfies
\begin{equation}
\label{limitbound}
\lambda(\mathcal F) \ge 2\left(\dfrac{1}{q^{k}-1}+\dfrac{1}{q^{n-k}-1}\right)^{-1}.
\end{equation}

Consider a tower $\mathcal G =(G_1 \subset G_2 \subset \cdots)$ over $\mathbb F_{\ell}$. Assume that for all $i\ge 1$ the extensions $G_{i+1}/G_i$ are separable (hence so are the extensions $G_i/G_1$). Let  $\tilde{G_i}$ be the Galois closure of the extension $G_i/G_1$ and assume that $\mathbb F_{\ell}$ is algebraically closed in all $\tilde{G_i}$. The tower  $\tilde{\mathcal G} =(\tilde{G_1} \subset \tilde{G_2} \subset \cdots)$ is called the Galois closure of $\mathcal G$.

In this paper we investigate the Galois closure of the tower $\mathcal F$ and of some of its subtowers introduced in \cite{BBGS}.
We investigate the splitting and ramification behavior of places in these towers, study the Galois groups of the extensions and show that each of these Galois towers has a limit satisfying Inequality~\eqref{limitbound}. Along the way, we also show that there exists a finite extension $E/F_1$ such that each step in the composite tower $E\mathcal F=(EF_1 \subset EF_2 \subset \cdots)$ is Galois with an elementary abelian $p$-group as Galois group. We collect the main results of this paper in the following theorem:

\begin{theorem}\label{thm:collecting}
Let $q$ be a prime power. For any integer $n>1$ and $1 \le k <n$ with $\gcd(k,n-k)=1$ there exists a tower $\mathcal{N}=(N_1 \subset N_2 \subset \cdots)$ over $\F_\ell$, where $\ell=q^n$, such that
\begin{enumerate}[i)]
\item $N_1=\F_\ell(z_1)$ is a rational function field.
\item For each $i\geq 2$, the extension $N_i/N_1$ is a Galois extension having as Galois group an extension of a subgroup of $\mathrm{GL}_{n-1}(\F_q)$ by a $p$-group. The extension $N_i/N_2$ is a $p$-extension.
\item The place $[z_1=-1]$ of $N_1$ splits completely in $\mathcal{N}$; i.e., it splits completely in each extension $N_i/N_1$.
\item The only places of $N_1$ which are ramified in $\mathcal{N}$ are $P_0:=[z_1=0]$ and $P_\infty:=[z_1=\infty]$, and they are weakly ramified (i.e., their second ramification groups are trivial).
\item For each $i>1$, the extension $N_i/N_2$ is $2$-bounded; more precisely, for any place $P$ of $N_2$  the ramification index $e(P)$ and different exponent $d(P)$ of $P$ in the extension $N_i/N_2$ satisfy
$$d(P)=2(e(P)-1).$$
\item Let $e_i(P_0)$ and $e_i(P_\infty)$ denote the ramification indices in the extension $N_i/N_1$ of the places $P_0$ and $P_\infty$ respectively and assume that $i>1$. We have
    $$e_i(P_0)=(q^k-1)q^{(i-1)(n-k)-k}p^{\epsilon_1(i)}$$
and
$$e_i(P_\infty)=(q^{n-k}-1)q^{(i-1)(n-k)}p^{\epsilon_2(i)},$$
with $\epsilon_1(i),\epsilon_2(i) \ge 0$.
\item The limit of the tower satisfies $$\lambda(\mathcal{N}) \ge 2\left(\frac{1}{q^k-1}+\frac{1}{q^{n-k}-1}\right)^{-1}.$$
\end{enumerate}
\end{theorem}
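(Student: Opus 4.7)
The plan is to take $\mathcal{N}$ to be the Galois closure of (a suitable variant of) the tower $\mathcal{F}$ from \cite{BBGS}, with $N_1$ realized as the rational function field in a parameter $z_1$ chosen so that the known splitting place of $\mathcal{F}$ becomes $[z_1=-1]$ and so that $z_1$ lies in the fixed field of the Galois action to be constructed. Taking $N_i$ to be the Galois closure of $F_i/N_1$ inside a fixed algebraic closure, and verifying (using that the splitting place is rational with a rational extension in each $\tilde F_i$) that $\F_\ell$ remains algebraically closed in $N_i$, gives property (i) immediately.

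For (ii), my approach is to observe that the left-hand side of~(\ref{tow1i}), after clearing denominators, is an $\F_q$-linear $q$-polynomial in $y$ with coefficients in $\F_\ell(x)$; hence for generic $x$ the set of solutions is an affine $(n-1)$-dimensional $\F_q$-subspace of a fixed algebraic closure. The Galois closure of the first step $F_2/F_1$ is obtained by adjoining all $n-1$ basis vectors of the associated $\F_q$-vector space, and the Galois group acts $\F_q$-linearly on this space; modulo the translation part (which is killed because one particular solution generates $F_2$), we get an embedding $\mathrm{Gal}(N_2/N_1)\hookrightarrow\mathrm{GL}_{n-1}(\F_q)$. For the higher steps I would argue inductively: each step $F_{i+1}/F_i$ is again defined by an $\F_q$-linear equation, so its Galois closure over $N_i$ is generated by an $\F_q$-vector space of functions; since the new group sits inside $(\F_q^{n-1})^{[N_i:N_2]}$ as a subgroup, it is an elementary abelian $p$-group, and the extension structure of $\mathrm{Gal}(N_i/N_1)$ in (ii) follows.

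For (iii) I would substitute $z_1=-1$ into the defining equation and check that the resulting $\F_q$-linear equation $L(y)=1$ has all its $q^{n-1}$ solutions in $\F_\ell$ and unramified, so that $[z_1=-1]$ splits completely in $N_2/N_1$; the splitting then propagates recursively. For (iv) and (v), I would analyze the local behavior at $P_0$ and $P_\infty$: there the recursion degenerates (with $y/x^{q^k}$ or $y^{q^{n-k}}/x$ as dominant term) into Artin--Schreier type equations whose local generators have pole order one at the ramified place, giving weakly ramified extensions with $d(P)=2(e(P)-1)$ at every step; by transitivity of the different, the 2-boundedness in (v) is preserved. For (vi), I would track the ramification of $P_0$ and $P_\infty$ through $\mathcal{F}$ using the computations of \cite{BBGS}, then multiply by the ramification contributions produced when passing to the Galois closure (which are controlled by the $p$-group quotient), yielding the stated formulas with $p$-power factors $p^{\epsilon_j(i)}$.

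With (iii)--(vi) in place, property (vii) follows from the Hurwitz genus formula: complete splitting gives $N(N_i)\ge [N_i:N_1]$, while 2-boundedness together with the explicit ramification indices in (vi) yields $2g(N_i)-2\le -2[N_i:N_1]+2[N_i:N_1]\bigl(1/(q^k-1)+1/(q^{n-k}-1)\bigr)+o([N_i:N_1])$, and the ratio $N(N_i)/g(N_i)$ converges to a value bounded below by the right-hand side of~(\ref{limitbound}), as in the non-Galois case. The main obstacle is property (ii): controlling the Galois closure through all the iterated steps and showing that the ``linear'' quotient really sits inside $\mathrm{GL}_{n-1}(\F_q)$ (rather than growing at each step) requires a careful inductive analysis of how the $\F_q$-vector space structure of solutions to~(\ref{tow1i}) behaves under the previously constructed Galois action, and of the possible collapsing of these spaces caused by non-trivial algebraic relations between the adjoined elements.
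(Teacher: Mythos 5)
Your high-level strategy matches the paper: take the Galois closure over a rational subfield $\F_\ell(z_1)$ of the base, use the $q$-additivity of the defining equation to identify the root space with an $(n-1)$-dimensional $\F_q$-vector space, control ramification with the results of \cite{BBGS}, and finish with Riemann--Hurwitz. The parameter you allude to is, in the paper, $z_1 = -x_1^{q^n-1}$, and the tower whose Galois closure is taken is the subtower $\mathcal H$ with $H_i=\F_\ell(z_1,\dots,z_i)$, not $\mathcal F$ itself; taking the closure of $F_i/\F_\ell(z_1)$ would drag in a cyclic factor of order $q^n-1$ that does not fit inside an extension of a subgroup of $\mathrm{GL}_{n-1}(\F_q)$ by a $p$-group.

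The genuine gap is exactly the one you flag as ``the main obstacle,'' namely item~ii). Your assertion that the Galois group of the step from $N_i$ to $N_{i+1}$ ``sits inside $(\F_q^{n-1})^{[N_i:N_2]}$ as a subgroup'' is unjustified: the Galois group of the splitting field of a $q$-affine polynomial of degree $q^{n-1}$ is an extension of a subgroup of $\mathrm{GL}_{n-1}(\F_q)$ (acting on the homogeneous root space) by the translation group, and it collapses to pure translations only if the splitting field of the \emph{homogeneous} part is already present in the base. Nothing in your sketch explains why the homogeneous root spaces at step $i+1$ are already contained in $N_2$. This is precisely what the paper's \emph{shifting lemma} (Lemma~\ref{shiftlemma}), together with the injectivity statement (Lemma~\ref{lem:phiinjective}) and the identity of splitting fields of $f(T)$ and $g(T)$ (Proposition~\ref{thm:splittingfields}), establishes: a root $s_i$ of the homogeneous polynomial $f_i(T)$ together with $u_i$ produces a root $s_{i+1}$ of $f_{i+1}(T)$, bijectively, so that the single splitting field $M$ of $f_1(T)$ already contains the linear parts of every subsequent step. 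Consequently $H'_i:=M\cdot H_i$ gives a tower with elementary abelian $p$-steps, $N_i/N_2$ is a $p$-extension, and $\mathrm{Gal}(N_i/N_1)$ is an extension of a subgroup of $\mathrm{GL}_{n-1}(\F_q)$ by a $p$-group. Without this, not only does ii) remain open, but v) also fails to close: you need $N_i/N_2$ to be a $p$-extension in order to convert weak ramification into the equality $d(P)=2(e(P)-1)$. The rest of your outline (splitting of $[z_1=-1]$, weak ramification from the local Artin--Schreier structure, the ramification indices from \cite{BBGS}, and the Hurwitz computation for vii)) is consistent with the paper's route and would go through once ii) is in place.
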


\section{Preliminaries}\label{sec:one}

In this section we establish some preliminaries and recall some notations and results from \cite{BBGS}.

Throughout the rest of the paper, $q$ will be a power of a prime $p$ and $\ell=q^n$ for some $n \geq 2$. Let $E/F$ be a Galois extension of function fields over $\mathbb{F}_\ell$. Let $P$ be a place of $F$ and $Q$ a place of $E$ lying over $P$. We say that $Q|P$ is weakly ramified, if $G_2(Q|P)=\{e\}$, where $G_2(Q|P)$ denotes the second ramification group of $Q|P$. The Galois extension $E/F$ is said to be weakly ramified, if for all places $P$ of $F$ and all places $Q$ lying above $P$, $Q|P$ is weakly ramified.
A weakly ramified $p$-extension $E/F$ is $2$-bounded. For such an extension, for every place $P$ of $F$ and every place $Q$ above $P$ we have $d(Q|P)=2\cdot (e(Q|P)-1)$.

For convenience we define for any positive integer $i$ the \emph{trace polynomial}
$$\Tr_i(x)= x+x^q+x^{q^2}+\cdots+x^{q^{i-1}}$$
The trace polynomials $\Tr_i(x)$ are examples of $q$-additive polynomials. The following lemma will be useful later on:
\begin{lemma}
\label{lem1}
Let $i$ and $j$ be positive integers.
\begin{enumerate}[i)]
\item We have
$$\Tr_i(\Tr_j(x))=\Tr_j(\Tr_i(x)).$$
More generally, any two $q$-additive polynomials with coefficients in $\mathbb F_q$ commute.
\item Setting $r=\gcd(i,j)$, for any field $L\supset \mathbb F_q$we have
$$L(\Tr_i(x),\Tr_j(x))=L(\Tr_r(x))\subseteq L(x).$$
In particular, if $\gcd(i,j)=1$, then $L(\Tr_i(x),\Tr_j(x))=L(x)$.
\end{enumerate}
\end{lemma}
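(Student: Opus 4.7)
For part (i), the plan is a direct expansion. Writing both sides out, $\Tr_i(\Tr_j(x))=\sum_{a=0}^{i-1}\sum_{b=0}^{j-1} x^{q^{a+b}}$, which is visibly symmetric in $i$ and $j$, so it equals $\Tr_j(\Tr_i(x))$. For the general statement about $q$-additive polynomials with $\F_q$-coefficients, I would reduce to the monomial case and check
$$(a x^{q^i})\circ(b x^{q^j}) \;=\; a\,b^{q^i}\,x^{q^{i+j}} \;=\; ab\,x^{q^{i+j}},$$
using $b^{q^i}=b$ for $b\in\F_q$; this expression is symmetric in the pairs $(a,i)$ and $(b,j)$, and bilinearity extends commutativity to arbitrary $q$-additive polynomials with coefficients in $\F_q$. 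Equivalently, the ring of such polynomials under $+$ and $\circ$ is isomorphic to the ordinary commutative polynomial ring $\F_q[F]$, with $F$ standing for the Frobenius.

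For part (ii), the plan is an induction on $i+j$ driven by the identity
\begin{equation*}
\Tr_i(x) \;=\; \Tr_{i-j}(x) + \Tr_j(x)^{q^{i-j}} \qquad (i>j),
\end{equation*}
which is immediate once one observes that $\Tr_j(x)^{q^{i-j}} = x^{q^{i-j}}+x^{q^{i-j+1}}+\cdots+x^{q^{i-1}}$ (valid because $\Tr_j$ has coefficients in $\F_q$, so Frobenius acts termwise). From this one identity I read off both that $\Tr_i(x)\in L(\Tr_{i-j}(x),\Tr_j(x))$ and, rearranging, that $\Tr_{i-j}(x)\in L(\Tr_i(x),\Tr_j(x))$. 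Hence $L(\Tr_i(x),\Tr_j(x))=L(\Tr_{i-j}(x),\Tr_j(x))$, and since $\gcd(i-j,j)=\gcd(i,j)=r$, the inductive hypothesis reduces the right-hand side to $L(\Tr_r(x))$. The base case $i=j$ is trivial (then $r=i=j$), the inclusion $L(\Tr_r(x))\subseteq L(x)$ is automatic from $\Tr_r(x)\in L[x]$, and when $\gcd(i,j)=1$ we have $r=1$, so $\Tr_1(x)=x$ and the ``in particular'' clause follows.

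The main obstacle, such as it is, lies in choosing the correct splitting. The more obvious identity $\Tr_i(x)=\Tr_j(x)+\Tr_{i-j}(x)^{q^j}$ places only $\Tr_{i-j}(x)^{q^j}$ in the generated field, and since there is no reason for this subfield of $L(x)$ to be closed under $q^j$-th roots, the Euclidean reduction would stall (one would have to argue separately, e.g.\ by Galois theory over $\overline L$, that a suitable root nevertheless lies in the field). The splitting above instead pulls the Frobenius power onto $\Tr_j(x)$, leaving $\Tr_{i-j}(x)$ unexponentiated, and with this in hand the Euclidean induction closes cleanly in a few lines.
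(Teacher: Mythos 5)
Your proof is correct and takes essentially the same route as the paper: part (i) by direct expansion/symmetry (with the Frobenius ring $\F_q[F]$ remark making the general claim transparent), and part (ii) via the identity $\Tr_i(x)=\Tr_{i-j}(x)+\Tr_j(x)^{q^{i-j}}$ together with Euclidean induction on the $\gcd$. Your parenthetical observation about why the other splitting $\Tr_i(x)=\Tr_j(x)+\Tr_{i-j}(x)^{q^j}$ would stall (one only obtains a $q^j$-th power in the subfield) is a sound explanation of why the paper uses exactly this decomposition.
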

\begin{proof}
The first part follows by a direct computation. For the second part we assume w.l.o.g. that $i>j$ (the case $i=j$ is trivial).
Then
$$\Tr_i(x)=\Tr_{i-j}(x)+(\Tr_j(x))^{q^{i-j}},$$
so
$$L(\Tr_i(x),\Tr_j(x))=L(\Tr_j(x),\Tr_{i-j}(x)).$$
The claim then follows from the properties of the Euclidean Algorithm.
\end{proof}

The second claim of the lemma is equivalent to saying that $\Tr_r(x)$ can be expressed in terms of $\Tr_i(x)$ and $\Tr_j(x)$. This can be shown more explicitly: Let $a$ and $b$ be positive integers such that $a i-b j=r$ (note that such $a$ and $b$ always exist). Then $\Tr_{ai}(x)-\Tr_{bj}(x)^{q^r}=\Tr_r(x),$ which implies that
\begin{equation}
\label{expleuclid}
\sum_{\alpha=0}^{a-1}\Tr_i(x)^{q^{\alpha i}}-\left(\sum_{\beta=0}^{b-1}\Tr_j(x)^{q^{\beta j}}\right)^{q^r}=\Tr_r(x).
\end{equation}
Now let $0<k<n$ with $\gcd(n,k)=1$ be given. Let $a,b$ be non-negative integers such that
\begin{equation}
\label{eq1}
a\cdot k-b\cdot (n-k)=1.
\end {equation}

Suppose $x$ and $y$ satisfy Equation \eqref{tow1i} and let $$R:=\frac{y}{x^{q^k}} \qquad \textrm{and} \qquad S:=\frac{y^{q^{n-k}}}{x}.$$ The quantities $R$ and $S$ occur in Equation \eqref{tow1i}:
$$
\underbrace{\frac{y}{x^{q^k}}}_{R}+\frac{y^q}{x^{q^{k+1}}}+\cdots+\underbrace{\frac{y^{q^{n-k-1}}}{x^{q^{n-1}}}}_{R^{q^{n-k-1}}}+\underbrace{\frac{y^{q^{n-k}}}{x}}_S+\frac{y^{q^{n-k+1}}}{x^q}+\cdots+\underbrace{\frac{y^{q^{n-1}}}{x^{q^{k-1}}}}_{S^{q^{k-1}}}=1.
$$
And therefore we obtain
\begin{equation}\label{eq:RS}
\Tr_{n-k}(R)+\Tr_k(S)=1.
\end{equation}
\begin{proposition}
The function field $\F_{\ell}(R,S)$ is a rational function field.
More precisely, letting
$$u:=\sum_{\alpha=0}^{a-1}R^{q^{\alpha k}}+\left(\sum_{\beta=0}^{b-1}S^{q^{\beta(n-k)}}\right)^{q},$$
we have $R=\Tr_k(u)-b$, $S=-\Tr_{n-k}(u)+a$ and hence $\F_{\ell}(R,S)=\F_{\ell}(u)$.
\end{proposition}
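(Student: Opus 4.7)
The plan is to compute $\Tr_k(u)$ and $\Tr_{n-k}(u)$ explicitly, show that they equal $R+b$ and $a-S$, and then read off $\F_\ell(R,S)=\F_\ell(u)$ from Lemma~\ref{lem1}(ii). The two ingredients that will do the real work are the relation \eqref{eq:RS} between $R$ and $S$, and the explicit Euclidean-algorithm identity \eqref{expleuclid} specialized to $(i,j,r)=(k,n-k,1)$, which is available precisely because $ak-b(n-k)=1$.

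To compute $\Tr_k(u)$ I will use that $\Tr_k$ has coefficients in $\F_q$, so by Lemma~\ref{lem1}(i) it commutes with every $q^m$-th power map. Writing $u=\Sigma_R+\Sigma_S^{q}$ for the two summands in the definition of $u$, this lets me push $\Tr_k$ inside both. The first piece collapses cleanly to $\Tr_{ak}(R)$. Inside the second piece there appears $\Tr_k(S)$, which I will replace by $1-\Tr_{n-k}(R)$ using \eqref{eq:RS}; the constant $1$ contributes a total of $b$ after summing over $\beta$ (using that integer constants are Frobenius-fixed in characteristic $p$), and the non-constant part reorganizes, after a second appeal to commutation of $\Tr_{n-k}$ with Frobenius, into $\Tr_{b(n-k)}(R)^{q}$. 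The outcome is
\[
\Tr_k(u)=\Tr_{ak}(R)-\Tr_{b(n-k)}(R)^{q}+b,
\]
at which point \eqref{expleuclid} applied with $x=R$ identifies the first two terms as $\Tr_1(R)=R$, giving $\Tr_k(u)=R+b$. The computation of $\Tr_{n-k}(u)$ is completely symmetric: this time \eqref{eq:RS} is invoked in the form $\Tr_{n-k}(R)=1-\Tr_k(S)$ on the first piece, and \eqref{expleuclid} with $x=S$ reduces the final expression to $\Tr_{n-k}(u)=a-S$. These two formulas are exactly the claimed $R=\Tr_k(u)-b$ and $S=-\Tr_{n-k}(u)+a$.

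For the equality $\F_\ell(R,S)=\F_\ell(u)$, the inclusion ``$\subseteq$'' is immediate from the definition of $u$, and the reverse inclusion follows from the two formulas just derived together with Lemma~\ref{lem1}(ii): since $\Tr_k(u),\Tr_{n-k}(u)\in\F_\ell(R,S)$ and $\gcd(k,n-k)=1$, we get $\F_\ell(u)=\F_\ell(\Tr_k(u),\Tr_{n-k}(u))\subseteq\F_\ell(R,S)$. The only real obstacle is bookkeeping: one must carefully track Frobenius twists, repeatedly invoking that $q$-additive polynomials with $\F_q$-coefficients commute with $q^m$-th power maps and that integer scalars are fixed by Frobenius. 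Once this discipline is maintained, everything telescopes via \eqref{expleuclid}.
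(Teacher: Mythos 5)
Your computation of $\Tr_k(u)$ and $\Tr_{n-k}(u)$ is exactly the paper's proof: push the trace inside via commutation, substitute $\Tr_k(S)=1-\Tr_{n-k}(R)$ (resp.\ $\Tr_{n-k}(R)=1-\Tr_k(S)$) from~\eqref{eq:RS}, collect the constant to get $b$ (resp.\ $a$), and telescope the remaining two trace sums to $R$ (resp.\ $S$) using~\eqref{expleuclid} with $r=1$. So the core of the argument matches the paper.

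One small logical slip in your final paragraph: you claim that $\F_\ell(R,S)\subseteq\F_\ell(u)$ ``is immediate from the definition of $u$'', and that the reverse inclusion requires Lemma~\ref{lem1}(ii). In fact the roles are swapped: the definition of $u$ (a polynomial in $R,S$) gives $\F_\ell(u)\subseteq\F_\ell(R,S)$ for free, whereas the nontrivial inclusion $\F_\ell(R,S)\subseteq\F_\ell(u)$ is precisely what the identities $R=\Tr_k(u)-b$ and $S=a-\Tr_{n-k}(u)$ deliver, with no need for Lemma~\ref{lem1}(ii) at all. Your Lemma~\ref{lem1}(ii) detour does establish $\F_\ell(u)=\F_\ell(\Tr_k(u),\Tr_{n-k}(u))\subseteq\F_\ell(R,S)$, which is true but was already immediate. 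So all the needed inclusions do appear somewhere in your argument and the proof survives, but the labels on the two directions should be exchanged and the Lemma~\ref{lem1}(ii) appeal dropped as redundant.
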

\begin{proof}
We have
\begin{align*}
\Tr_k(u) & =  \sum_{\alpha=0}^{a-1}\Tr_k(R)^{q^{\alpha k}}+\left(\sum_{\beta=0}^{b-1}\Tr_k(S)^{q^{\beta(n-k)}}\right)^{q} & \\
& =  \sum_{\alpha=0}^{a-1}\Tr_k(R)^{q^{\alpha k}}+\left(\sum_{\beta=0}^{b-1}(1-\Tr_{n-k}(R))^{q^{\beta(n-k)}}\right)^{q} & \makebox{by Equation \eqref{eq:RS}}\\
& = \Tr_{ak}(R)-\Tr_{b(n-k)}(R)^q+b\\
& = R+b. & \makebox{by Equation \eqref{eq1}}
\end{align*}
Similarly $\Tr_{n-k}(u)=-S+a$. It follows that $\F_{\ell}(R,S)=\F_{\ell}(u)$.
\end{proof}

From the above it is clear how to express $u$ explicitly in terms of $x$ and $y$. Note that
\begin{equation}
\label{eq:relations}
y^{q^n-1}=\frac{S^{q^k}}{R}=-\frac{\Tr_{n-k}(u)^{q^k}-a}{\Tr_k(u)-b} \ {\rm{and}} \ x^{q^n-1}=\frac{S}{R^{q^{n-k}}}=-\frac{\Tr_{n-k}(u)-a}{\Tr_k(u)^{q^{n-k}}-b}.
\end{equation}
It was shown in \cite[Lemma 2.9]{BBGS} that $\F_{\ell}(x^{q^n-1},y^{q^n-1})=\F_{\ell}(u)$. Therefore, one can express $u$ not only as a rational expression in $x$ and $y$, but also in $x^{q^n-1}$ and $y^{q^n-1}$, say $$u=\phi(x^{q^n-1},y^{q^n-1}).$$

Now let $\mathcal F =(F_i)_{i>0}$ be a tower over $\F_{\ell}$, where $F_1=\F_{\ell}(x_1)$ is a rational function field and for all $i>1$, there exist $x_i \in F_i$ such that  $F_{i}=F_{i-1}(x_{i})$ with
\begin{equation}
\label{tow1}
\frac{x_{i}}{x_{i-1}^{q^k}}+\frac{x_{i}^q}{x_{i-1}^{q^{k+1}}}+\cdots+\frac{x_{i}^{q^{n-k-1}}}{x_{i-1}^{q^{n-1}}}+\frac{x_{i}^{q^{n-k}}}{x_{i-1}}+\cdots+\frac{x_{i}^{q^{n-1}}}{x_{i-1}^{q^{k-1}}}=1.
\end{equation}
Thus $\mathcal F$ satisfies the recursion given by Equation~\eqref{tow1i}.

Defining $u_i:=\phi(x_i^{q^n-1},x_{i+1}^{q^n-1})$ and $z_i:=-x_i^{q^n-1}$, we see from Equation \eqref{eq:relations} that
\begin{equation}
\label{tow2}
z_{i}=-x_i^{q^n-1}=\frac{\Tr_{n-k}(u_{i})-a}{\Tr_k(u_{i})^{q^{n-k}}-b}=\frac{\Tr_{n-k}(u_{i-1})^{q^k}-a}{\Tr_k(u_{i-1})-b}.
\end{equation}

Consider the subtowers $\mathcal E =(E_i)_{i> 0}$ and $\mathcal H=(H_i)_{i> 0}$ of $\mathcal F$ where $E_i=\F_{\ell}(u_1,\dots,u_i)=\F_{\ell}(z_1,\dots,z_{i+1})$ and $H_i=\F_{\ell}(z_1,\dots,z_i)$. Note that for $i>0$ we have $E_i=H_{i+1}$. See Figure \ref{fig:one} for a graphical overview of the fields occurring in $\mathcal F$, $\mathcal E$ and $\mathcal H$. From Equation \eqref{tow2} we see that the tower $\mathcal E$ satisfies a recursive equation. In \cite[Equation (38)]{BBGS} we gave a recursive equation satisfied by the tower $\mathcal H$.

\begin{remark}\label{rem:composite}
It was shown in \cite{BBGS} that $E_i(x_1)=F_{i+1}$. This means that the tower $\mathcal F$ can be seen as the composite of the tower $\mathcal H$ and the field $F_1$.
\end{remark}

\begin{figure}[h]
\begin{center}
\scalebox{1.0}{
\makebox[\width][c]{
\xymatrix@!=2.5pc@dr{
\framebox{{\Large $\mathcal F$}}&\framebox{{\Large $\mathcal H, \mathcal E$}}&&&&\\
F_{4}=E_3(x_1) \ar@{-}[r] \ar@{.}[u]& H_4=E_3=\F_{\ell}(u_1,u_2, u_3) \ar@{-}[r] \ar@{.}[u]&K(u_2,u_3)\ar@{-}[r]\ar@{.}[u]& K(u_3) \ar@{.}[u] \ar@{-}[r]& \\
F_{3}=E_2(x_1) \ar@{-}[r] \ar@{-}[u]& H_3=E_2=\F_{\ell}(u_1,u_2) \ar@{-}[u] \ar@{-}[r]&  \F_{\ell}(u_2) \ar@{-}[u] \ar@{-}[r] & \F_{\ell}(z_3) \ar@{-}[u]\\
F_2=E_1(x_1) \ar@{-}[r] \ar@{-}[u]& H_2=E_1=\F_{\ell}(u_1) \ar@{-}[u] \ar@{-}[r]&\F_{\ell}(z_2) \ar@{-}[u]\\
F_1=\F_{\ell}(x_1) \ar@{-}[u]\ar@{-}[r] & H_1=\F_{\ell}(z_1)\ar@{-}[u]
}
}
}
\end{center}
\caption{The towers $\mathcal F=(F_i)_{i>0}$, $\mathcal E=(E_i)_{i>0}$ and $\mathcal H =(H_i)_{i > 0}$.\label{fig:one}}
\end{figure}
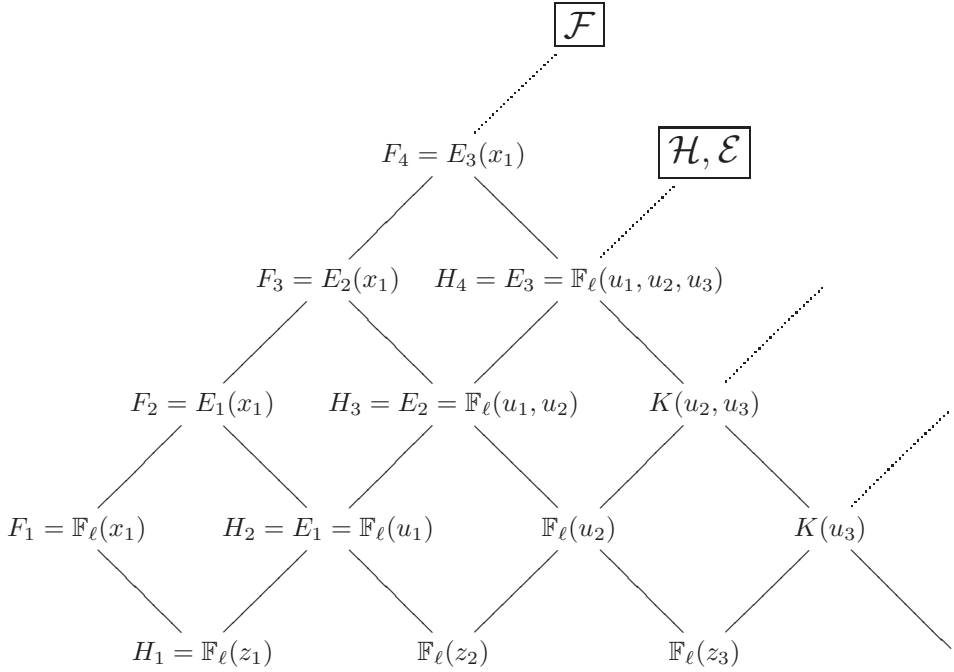

\begin{remark} Let $\mathcal F$ be a tower satisfying a recursion $f(x,y)=0$. Define the dual polynomial $\hat{f}(x,y):=f(y,x)$. A tower $\hat{\mathcal F}$ satisfying the recursion $\hat{f}(x,y)=0$ is called a dual tower of $\mathcal F$.

Let $\hat{\mathcal E}$ be a dual tower of the tower $\mathcal E$ defined above. The towers $\mathcal E$ and $\hat{\mathcal E}$ have very similar behavior. Equation \eqref{tow2} implies that the tower $\hat{\mathcal E}$ satisfies the recursive equation
\begin{equation}
\label{tow2d}
\frac{\Tr_k(u_{r})-b}{\Tr_{n-k}(u_{r})^{q^k}-a}=\frac{\Tr_k(u_{r-1})^{q^{n-k}}-b}{\Tr_{n-k}(u_{r-1})-a}.
\end{equation}
This equation is obtained from Equation \eqref{tow2} by interchanging both $k$ with $n-k$ and $a$ with $b$.
\end{remark}

\begin{remark}
If $\gcd(n-k,p)=1$, we can choose $a \equiv 0 \pmod{p}$ in Equation \eqref{eq1}. The corresponding choice of $b$ will satisfy $b\cdot(n-k) \equiv -1 \pmod{p}.$ Equation~\eqref{tow2} then gets the form
$$\frac{\Tr_{n-k}(u_{i+1})}{\Tr_k(u_{i+1})^{q^{n-k}}+\alpha}=\frac{\Tr_{n-k}(u_{i})^{q^k}}{\Tr_k(u_{i})+\alpha},$$
with $\alpha:=(n-k)^{-1} \in \F_p$. In this form the subtower $\mathcal E \subset \mathcal F$ appeared in \cite{BBGS}.
\end{remark}
Next we collect some facts about the tower $\mathcal H$.
\begin{proposition}\label{prop:splittingH}
The place $[z_1=-1]$ of $H_1$ splits completely in the tower $\mathcal H$.
\end{proposition}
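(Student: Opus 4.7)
The plan is to proceed by induction on $i$, proving simultaneously that the place $[z_1=-1]$ has exactly $[H_i:H_1]$ rational places above it in $H_i$, and that each such place $Q$ satisfies $z_i(Q)=-1$. The base case $i=1$ is trivial. For the inductive step, I fix a rational place $Q$ of $H_i$ with $z_i(Q)=-1$ and show that $Q$ splits into $q^{n-1}$ rational places of $H_{i+1}$ above it, on each of which $z_{i+1}$ takes the value $-1$.

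The central algebraic identity follows from \eqref{tow2} together with the decompositions $\Tr_n(u)=\Tr_{n-k}(u)+\Tr_k(u)^{q^{n-k}}=\Tr_{n-k}(u)^{q^k}+\Tr_k(u)$: adding $1$ to both expressions for $z_i$ and $z_{i+1}$ yields
$$z_i+1 = \frac{\Tr_n(u_i)-(a+b)}{\Tr_k(u_i)^{q^{n-k}}-b} \qquad \text{and} \qquad z_{i+1}+1 = \frac{\Tr_n(u_i)-(a+b)}{\Tr_k(u_i)-b}.$$
Thus, away from the zero sets of the denominators, the three conditions $z_i=-1$, $\Tr_n(u_i)=a+b$ and $z_{i+1}=-1$ are equivalent. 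This equivalence will be used both to locate the rational places above $[z_1=-1]$ and to verify $z_{i+1}=-1$ on the places produced.

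For the inductive step, $H_{i+1}=H_i(u_i)$, and $u_i$ satisfies the polynomial equation $\Tr_{n-k}(u_i)-a=z_i(\Tr_k(u_i)^{q^{n-k}}-b)$ of degree $q^{n-1}$ in $u_i$ over $H_i$, with leading coefficient $-z_i$ which is a unit at $Q$. Specializing at $Q$ (where $z_i=-1$), the polynomial reduces to $\Tr_n(T)-(a+b)\in\F_\ell[T]$. Since $a+b\in\F_p\subseteq\F_q$ and $\Tr_n\colon\F_{q^n}\to\F_q$ is a surjective $\F_q$-linear map with kernel of size $q^{n-1}$, this reduction is separable and splits into $q^{n-1}$ distinct linear factors over $\F_\ell$. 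Kummer's theorem then produces $q^{n-1}$ distinct rational, unramified places of $H_{i+1}$ above $Q$; combined with the degree bound $[H_{i+1}:H_i]\le q^{n-1}$, this forces complete splitting. The identity of the previous paragraph then gives $z_{i+1}=-1$ at each of these places, closing the induction.

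The main delicate point is to ensure that, at each of the $q^{n-1}$ places $Q'$ above $Q$, the formula for $z_{i+1}$ is not of the form $0/0$, which would leave $z_{i+1}(Q')$ genuinely ambiguous. Such a situation would require $u^*:=u_i(Q')$ to satisfy both $\Tr_n(u^*)=a+b$ and $\Tr_k(u^*)=b$ (using $u^*\in\F_\ell$ and $b\in\F_p$ to promote $\Tr_k(u^*)^{q^{n-k}}=b$ to $\Tr_k(u^*)=b$), whence also $\Tr_{n-k}(u^*)=a$. Plugging these into \eqref{expleuclid} with $r=1$ would then force $u^*=ab-(ab)^q=0$ (since $ab\in\F_p$), which is incompatible with $\Tr_k(0)=\Tr_{n-k}(0)=0$ together with the constraint \eqref{eq1} that forbids $a=b=0$. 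This rules out the problematic case and makes the argument complete.
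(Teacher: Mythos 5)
Your proof is correct, and it is genuinely different from the paper's, which simply cites \cite[Corollary 3.2]{BBGS} together with the relation $z_1=-x_1^{q^n-1}$, i.e.\ it outsources the splitting statement to the analogous result for the larger tower $\mathcal F$ over $F_1=\F_\ell(x_1)$. Your argument instead works entirely inside the tower $\mathcal H$ and is self-contained: you induct on $i$, write $H_{i+1}=H_i(u_i)$ where $u_i$ satisfies the degree-$q^{n-1}$ relation coming from \eqref{tow2}, observe that specializing its coefficients at $z_i=-1$ collapses the polynomial to $\Tr_n(T)-(a+b)$ (via $\Tr_n=\Tr_{n-k}+\Tr_k^{\,q^{n-k}}$), note that this has $q^{n-1}$ distinct roots in $\F_\ell$ because $\Tr_n\colon\F_{q^n}\to\F_q$ is onto with kernel of size $q^{n-1}$, and then invoke Kummer's theorem to obtain $[H_{i+1}:H_i]$ unramified rational places above $Q$. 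The propagation $z_{i+1}(Q')=-1$ is handled cleanly by the identity $z_{i+1}+1=(\Tr_n(u_i)-(a+b))/(\Tr_k(u_i)-b)$, and the potential $0/0$ degeneracy is correctly excluded via \eqref{expleuclid} and \eqref{eq1} (your phrasing ``forbids $a=b=0$'' really means $a\equiv b\equiv 0\pmod p$, which contradicts $ak-b(n-k)=1$, so the conclusion is right). A minor technical remark: since you do not assume in advance that $[H_{i+1}:H_i]=q^{n-1}$, Kummer's theorem must be applied to the minimal polynomial $\mu$ of $u_i$ rather than to the degree-$q^{n-1}$ polynomial $\varphi$ itself; but $u_i$ is integral at $Q$ (the leading coefficient $-z_i$ is a unit there), so $\mu$ is a monic $O_Q$-factor of the monicized $\varphi$, hence its reduction is a factor of the squarefree $\Tr_n(T)-(a+b)$ and still splits into distinct linear factors over $\F_\ell$ — exactly what you need to get complete splitting, regardless of the actual degree. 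What your route buys is independence from \cite{BBGS}; what the paper's route buys is brevity, having already established the corresponding statement for $\mathcal F$.
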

\begin{proof}
This follows from \cite[Corollary 3.2]{BBGS} and the fact that $z_1=-x_1^{q^n-1}$.
\end{proof}

While investigating ramification, we replace the constant field $\F_{\ell}$ by its algebraic closure $K:=\overline{\F}_\ell$. Moreover, for completions, since the place at which we complete is clear from the context, we do not specify the place explicitly in the notation. A place and the corresponding maximal ideal of the valuation ring in the completion are by slight abuse of notation denoted by the same symbol. 

\begin{proposition}\label{prop:ramificationH}
Let $i>0$ and $Q$ be a place of $H_i$, let $P=Q \cap H_1$ be its restriction to $H_1$. If $Q|P$ is ramified, then one of the following holds:
\begin{enumerate}
\item There exists $1 \le m <i $ such that $z_1(Q)=\cdots=z_m(Q)=0$ and $z_{m+1}(Q)=\cdots=z_i(Q)=\infty$. Completing various fields at $Q$ and its restrictions, there is an intermediate field $L$ of the extension $\widehat{H_i}/\widehat{H_1}$ such that $L/\widehat{H_1}$ is cyclic of degree $q^k-1$ and $\widehat{H_i}/L$ can be divided into $2$-bounded elementary abelian $p$-extensions.

\item One has $z_1(Q)=\infty$ and $e(Q|P)=q^{(n-k)(i-1)}$. Let $t_0 \neq 0$ be chosen such that $\Tr_{n-k}(t_0)^{q^k}-z_1\Tr_k(t_0)=0$
and choose a place $P'$ of $K(t_0)$ such that $t_0(P')=\infty$. Suppose that there exists a place $Q'$ of $H_i(t_0)$ lying above both $P'$ and $Q$.
\begin{enumerate}
\item  Completing various fields at $Q'$ and its restrictions, there is an intermediate field $G_1$ of the extension $\widehat{K(t_0)}/\widehat{K(z_{1})}$ such that $G_1/\widehat{K(z_{1})}$ is cyclic of degree $q^{n-k}-1$ and $\widehat{K(t_0)}/G_1$ is a $2$-bounded elementary abelian $p$-extension.
\item Letting $G_j$ be $G_1 \widehat{H_j}$ for $1 \le j \le i$, the extensions $G_{j+1}/G_j$ are $2$-bounded elementary abelian $p$-extensions for $1 \le j < i$.
\end{enumerate}
\end{enumerate}
\end{proposition}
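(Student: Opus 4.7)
My plan is to perform a careful local ramification analysis at each step of the tower, using the factorization $H_{j+1}=H_j(u_j)$ with
$$z_j = \frac{\Tr_{n-k}(u_j) - a}{\Tr_k(u_j)^{q^{n-k}} - b}, \qquad z_{j+1} = \frac{\Tr_{n-k}(u_j)^{q^k} - a}{\Tr_k(u_j) - b}.$$
A direct inspection of these rational maps reveals that the only places of $K(u_j)$ that can produce ramification over $K(z_j)$ or $K(z_{j+1})$ are $[u_j=\infty]$, the $q^{k-1}$ places with $\Tr_k(u_j)=b$, and the $q^{n-k-1}$ places with $\Tr_{n-k}(u_j)=a$ (which by Lemma~\ref{lem1} intersect in a single point of $K$). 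Tracking the possible patterns $(z_1(Q),\ldots,z_i(Q))$ along a ramified chain $Q$ then forces one of the two cases listed.

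For Case 1, the pattern $z_1=\cdots=z_m=0$ and $z_{m+1}=\cdots=z_i=\infty$ corresponds to $u_m(Q)=\infty$ in $K(u_m)$. At this switch point one computes $v_{[u_m=\infty]}(z_m)=q^{n-k-1}(q^k-1)$ and $v_{[u_m=\infty]}(z_{m+1})=-q^{k-1}(q^{n-k}-1)$, so that $K(u_m)/K(z_m)$ carries the tame factor $q^k-1$ at this place. After passing to the Galois closure, this supplies the cyclic intermediate extension $L/\widehat{H_1}$ of degree $q^k-1$. Along the zero-chain $j\le m$, each step $\widehat{H_{j+1}}/\widehat{H_j}$ amounts in the completion to adjoining a root of the $q$-additive polynomial $\Tr_{n-k}(\cdot)-a$; along the infinity-chain $j\ge m+1$, a root of $\Tr_k(\cdot)-b$. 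Each such adjunction is an elementary abelian $p$-extension, and a Hurwitz computation (relying on $\Tr_k'=\Tr_{n-k}'=1$) shows that the different exponent at every ramified place is exactly $2(e-1)$, giving $2$-boundedness.

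For Case 2, the tame part at $[z_1=\infty]$ is untangled by the auxiliary field $K(t_0)$ defined by $z_1=\Tr_{n-k}(t_0)^{q^k}/\Tr_k(t_0)$: this rational function has a pole of order $q^{k-1}(q^{n-k}-1)$ at $[t_0=\infty]$, producing ramification $q^{k-1}(q^{n-k}-1)$ which factors as a tame cyclic extension of degree $q^{n-k}-1$ (yielding $G_1/\widehat{K(z_1)}$) followed by a wild elementary abelian $p$-extension of degree $q^{k-1}$ (yielding $\widehat{K(t_0)}/G_1$). The composites $G_j=G_1\widehat{H_j}$ are then analyzed inductively by the same $q$-additive Artin--Schreier-type argument as in Case 1: each step $G_{j+1}/G_j$ absorbs the wild factor $q^{n-k}$ coming from $\Tr_k(u_j)=b$, remains $2$-bounded elementary abelian $p$, and the cumulative ramification $e(Q|P)=q^{(n-k)(i-1)}$ follows by multiplication over $j$.

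The main obstacle is the local bookkeeping: expanding $\Tr_k$ and $\Tr_{n-k}$ about each candidate $u_0$ and tracking how the order of each pole or zero propagates through the recursion, then checking at every stage that the wild part is $q$-additive (hence elementary abelian $p$) rather than something more complicated. The hypothesis $\gcd(k,n-k)=1$ via Lemma~\ref{lem1}, together with the fact that the trace polynomials have constant derivative $1$, is what ultimately forces the different exponents to satisfy $d=2(e-1)$ throughout and guarantees that the tame contributions appear in exactly the two cyclic orders $q^k-1$ and $q^{n-k}-1$ asserted by the proposition.
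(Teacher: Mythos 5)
Your plan follows the same underlying strategy as the source (cited in the paper as [BBGS, Props.~3.5, 3.6, Fig.~14]): a step-by-step local analysis of the recursion, with the tame factors $q^k-1$ and $q^{n-k}-1$ extracted from the valuations of $z_m$, $z_{m+1}$ at $[u_m=\infty]$ and at the pole of $t_0$, and the wild factors treated as Artin--Schreier-type $p$-steps. Your pole-order computations $v_{[u_m=\infty]}(z_m)=q^{n-k-1}(q^k-1)$ and $v_{[u_m=\infty]}(z_{m+1})=-q^{k-1}(q^{n-k}-1)$ are correct, and the identification of $t_0$ as the right auxiliary element for Case~2 also matches. However, as written the proposal is an outline, not a proof, and the parts left implicit are precisely the nontrivial content of the proposition.

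Three genuine gaps. First, the assertion that ``tracking the possible patterns $(z_1(Q),\ldots,z_i(Q))$ forces one of the two cases'' is not argued. You must show, for instance, why an all-zero chain cannot be ramified, why a zero can be followed only by a zero or an infinity but not by another finite value, and why once $z_j(Q)=\infty$ all subsequent values are $\infty$. This requires understanding how ramification propagates through the recursion from one step to the next (i.e., the finiteness and precise structure of the ramification locus of a recursive tower), which is a theorem, not an inspection: the places of $K(u_j)$ ramified in $K(u_j)/K(z_j)$ are only a first approximation, since $\widehat{H_{j+1}}/\widehat{H_j}$ sits over an already-ramified base. Second, the $2$-boundedness at each $p$-step is asserted to follow from ``a Hurwitz computation relying on $\Tr_k'=\Tr_{n-k}'=1$.'' This is not sufficient: the step $\widehat{H_{j+1}}/\widehat{H_j}$ is not literally adjoining a root of the constant-coefficient polynomial $\Tr_{n-k}(T)-a$ but of its $z_j$-deformation $\Tr_{n-k}(T)-a - z_j(\Tr_k(T)^{q^{n-k}}-b)$, and for such an extension the different exponent depends on the pole structure of the right-hand side after local Artin--Schreier reduction. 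Showing that the reduced invariant yields $d=2(e-1)$ everywhere is exactly the delicate local computation in [BBGS]. Third, in Case~2 the splitting of $\widehat{K(t_0)}/\widehat{K(z_1)}$ into a cyclic degree $q^{n-k}-1$ piece followed by a $2$-bounded elementary abelian $p$-piece, and the inductive claim about $G_{j+1}/G_j$, are again deferred to ``the same argument'' without the actual verification. Filling these in would essentially reproduce the cited propositions of [BBGS], which is what the paper's own proof leans on.

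Note also a small inaccuracy: you claim the loci $\{\Tr_k(u_j)=b\}$ and $\{\Tr_{n-k}(u_j)=a\}$ ``intersect in a single point by Lemma~\ref{lem1},'' but Lemma~\ref{lem1} and Equation~\eqref{expleuclid} give this only for the homogeneous system ($b=a=0$); for general $a,b$ the intersection may be empty, and in any case this needs to be stated carefully if it plays a role.
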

\begin{proof}
The fact that the ramification locus of the tower $\mathcal H$ only consists of the zero and the pole of $z_1$ is a direct consequence of \cite[Proposition 2.6]{BBGS}. The first part about the zero of $z_1$ follows from \cite[Propositions 3.5, 3.6 and Figure 14]{BBGS}. The second part can be shown very similarly to these propositions. The only difference with \cite[Propositions 3.5 and 3.6]{BBGS} is that the element $t_0$ satisfies the equation $\Tr_{n-k}(t_0)^{q^k}-z_1\Tr_k(t_0)=0$, while the element $u$ mentioned there satisfies $\Tr_{n-k}(u)^{q^k}-z_2\Tr_k(u)=a-bz_2.$
\end{proof}

\section{The Galois closure of the tower $\mathcal H$}

Let us denote by $N_i$ the Galois closure of the extension $H_i/H_1$. It follows easily that $\mathbb F_{\ell}$ is algebraically closed in all $N_i$, since there exists a rational place of $H_1$ splitting completely in the extension $H_i/H_1$ (see \cite[Proposition 14]{galclos}). By definition, the tower $\mathcal{N}=\tilde{\mathcal{H}}=(N_1 \subset N_2 \subset \dots)$ is the Galois closure of $\mathcal H$ (over $N_1=H_1$). It is a Galois tower; i.e., each extension $N_i/N_1$ is a Galois extension. We will now study the limit of $\mathcal{N}$ and show that it satisfies Inequality~\eqref{limitbound}.

The field $N_i$ is obtained by taking the composite of several conjugates $\sigma(H_i)$ of $H_i$, with $\sigma$ an element of the absolute Galois group of $H_1$. Since the ramification behavior in the extension $\sigma(H_i)/H_1$ is similar to that of $H_i/H_1$, the analysis of the tower $\mathcal H$ in \cite{BBGS} as described in Proposition \ref{prop:ramificationH} will be very useful. We start by studying the Galois closure of the extension $H_2/H_1$. We define the polynomials
\begin{equation}
\label{hom}
f(T)  :=  -z_1^{-1}\Tr_{n-k}(T)+\Tr_k(T)^{q^{n-k}}
\end{equation}
and
\begin{equation}
\label{homd}
g(T):=\Tr_{n-k}(T)^{q^k}-z_1\Tr_k(T).\\
\end{equation}

\begin{proposition}\label{prop:galoisclosurestep1}
The Galois closure of $H_2/H_1$ is equal to the composite of $H_2$ and the splitting field of $f(T)$ over $H_1$.
\end{proposition}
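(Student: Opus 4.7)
The plan is to identify $f(T)-(b-az_1^{-1})$ as the minimal polynomial of $u_1$ over $H_1$ and then to exploit the $\F_q$-linearity of $f$ to read off its splitting field.

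First I would convert Equation~\eqref{tow2} into a polynomial equation for $u_1$ over $H_1=\F_\ell(z_1)$: clearing denominators in $z_1=(\Tr_{n-k}(u_1)-a)/(\Tr_k(u_1)^{q^{n-k}}-b)$ and dividing by $z_1$ yields $f(u_1)=b-az_1^{-1}$, with $f$ as in \eqref{hom}. Observe that $f(T)$ is $q$-additive of $T$-degree $q^{n-1}$, has coefficients in $H_1$, and is separable, since the coefficient of $T$ equals $-z_1^{-1}\neq 0$. Its kernel $V\subset\overline{H_1}$ is therefore an $\F_q$-vector space of dimension $n-1$, and by definition the splitting field of $f(T)$ over $H_1$ is $L=H_1(V)$.

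Next I would verify that $[H_2:H_1]=q^{n-1}$, so that $f(T)-(b-az_1^{-1})$ is in fact the minimal polynomial of $u_1$. The upper bound is immediate from the equation above, since $H_2=\F_\ell(u_1)=H_1(u_1)$. For the lower bound I would combine three ingredients: (a)~$[F_2:F_1]=q^{n-1}$, established in \cite{BBGS}; (b)~$[F_1:H_1]=q^n-1$, because $z_1=-x_1^{q^n-1}$ and the polynomial $T^{q^n-1}+z_1\in H_1[T]$ is Eisenstein at the place $[z_1=0]$; and (c)~$F_2=H_2(x_1)$ by Remark~\ref{rem:composite}, so $[F_2:H_2]\leq q^n-1$ since $x_1^{q^n-1}\in H_2$. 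Multiplicativity of degrees then gives
$$[H_2:H_1]=\frac{[F_2:H_1]}{[F_2:H_2]}\geq\frac{q^{n-1}(q^n-1)}{q^n-1}=q^{n-1},$$
matching the upper bound.

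Finally, the $\F_q$-linearity of $f$ implies that the roots of $f(T)-(b-az_1^{-1})$ in $\overline{H_1}$ are exactly $\{u_1+\alpha:\alpha\in V\}$, and by the degree count above these are precisely the $H_1$-conjugates of $u_1$. The Galois closure of $H_2/H_1$, being the splitting field of the minimal polynomial of $u_1$, therefore equals
$$H_1\bigl(\{u_1+\alpha:\alpha\in V\}\bigr)=H_1(u_1)\cdot H_1(V)=H_2\cdot L,$$
as claimed. The only delicate point is the degree computation in the second paragraph; once $[H_2:H_1]=q^{n-1}$ is secured, the conclusion is a direct consequence of the additivity of $f$.
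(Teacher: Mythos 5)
Your proof is correct and follows the same core idea as the paper's: use the $q$-additivity of $f$ to observe that differences of roots of $f(T)+az_1^{-1}-b$ are precisely roots of $f(T)$, so the splitting field of the former is $H_2$ composited with the splitting field of the latter. The paper's proof is essentially a one-liner and leaves implicit the fact that $f(T)+az_1^{-1}-b$ is the minimal polynomial of $u_1$ over $H_1$ (equivalently $[H_2:H_1]=q^{n-1}$), which it tacitly inherits from \cite{BBGS}; your degree computation via the chain $H_1\subset F_1\subset F_2$ and Remark~\ref{rem:composite} supplies this missing justification cleanly, and is a worthwhile addition.
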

\begin{proof}
The Galois closure of $H_2/H_1$ is obtained by adjoining to $H_2$ all roots of the polynomial $\Tr_{n-k}(T)-z_1\Tr_k(T)^{q^{n-k}}-a+bz_1$, or equivalently, all roots of the polynomial $f(T)+az_1^{-1}-b$. However, the differences of two roots $u,v$ of $f(T)+az_1^{-1}-b$ are exactly the roots of $f(T)$.
\end{proof}

The polynomial $g(T)$ plays the same role for a dual tower of $\mathcal H$ as the polynomial $f(T)$ does for $\mathcal H$. We will show in Proposition~\ref{thm:splittingfields} that the splitting fields of $f(T)$ and $g(T)$ are the same, which is a fact we will use later. To show this we need the following result (see \cite[Theorem 1.7.11]{Goss}):
\begin{proposition}\label{lem:goss}
Let $F$ be a field containing $\F_q$ and $h(T)=\sum_{i=0}^t a_i T^{q^i} \in F[T]$ be a $q$-additive polynomial with $a_0 \neq 0$ and $a_t \neq 0$. Define $h^{\rm ad}(T):=\sum_{i=0}^t a_i^{q^{t-i}}T^{q^{t-i}}$. Then the roots of $h(T)$ and $h^{\rm ad}(T)$ generate the same extension of $F$.
\end{proposition}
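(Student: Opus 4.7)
Both $h$ and $h^{\rm ad}$ are separable $q$-additive polynomials of degree $q^t$: their derivatives are the nonzero constants $a_0$ and $a_t$. So their root sets $V, W \subset \overline{F}$ are $t$-dimensional $\F_q$-subspaces, and $E := F(V)$, $E' := F(W)$ are finite Galois extensions of $F$. My plan is to realize $h^{\rm ad}$ as an $F$-rational avatar of the trace-adjoint of $h$, forcing $E = E'$ by a duality of Galois representations.

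I would first pass to a finite Galois extension $L/F$ containing $V \cup W$, equip it with the non-degenerate, $\mathrm{Gal}(L/F)$-invariant trace form $\langle x, y\rangle := \Tr_{L/\F_q}(xy)$, and use the identity $\Tr(z^q) = \Tr(z)$ to identify the trace-adjoint of the operator $h|_L$ as the $q$-additive operator
\[
h^{*}(y) \;:=\; \sum_{i=0}^t a_i^{q^{-i}} y^{q^{-i}},
\]
which is well-defined because $L$ is perfect and which satisfies $\langle h(x), y\rangle = \langle x, h^{*}(y)\rangle$ for all $x, y \in L$. A direct computation gives the crucial identity $h^{\rm ad}(y) = h^{*}(y)^{q^t}$, and since $q^t$-Frobenius is a bijection of $L$ this yields the $\mathrm{Gal}(L/F)$-equivariant identifications $W = \ker h^{\rm ad}|_L = \ker h^{*}|_L = (h(L))^{\perp}$ and, symmetrically, $V = (h^{*}(L))^{\perp}$.

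These identifications realize $V$ and $W$ as trace-duals of the cokernels $L/h^{*}(L)$ and $L/h(L)$ respectively. I would then use the $\mathrm{Gal}(L/F)$-equivariant four-term exact sequence $0 \to V \to L \xrightarrow{h} L \to L/h(L) \to 0$, together with its analogue for $h^{*}$, to conclude that the two representations $\mathrm{Gal}(L/F) \to \mathrm{GL}(V)$ and $\mathrm{Gal}(L/F) \to \mathrm{GL}(W)$ have the same kernel; taking the fixed field of this common kernel inside $L$ gives $E = E'$.

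The main obstacle is this last representation-theoretic step, since the naive restriction of the trace form to $V \times W$ may be degenerate on $V \cap h(L)$: one must chase the duality through the cokernel description above rather than work with the direct pairing, which requires some care with the $\F_q[\mathrm{Gal}(L/F)]$-module structure of $L$ (e.g.\ via the normal-basis theorem). This linear-algebraic bookkeeping is classical and is carried out in detail in \cite[Theorem 1.7.11]{Goss}, whose statement I would ultimately just invoke when applying the result to the polynomials $f(T)$ and $g(T)$ of \eqref{hom} and \eqref{homd}.
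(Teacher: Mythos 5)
The paper does not actually prove this proposition --- it is quoted verbatim from Goss and used as a black box (``see \cite[Theorem 1.7.11]{Goss}''). So there is no internal proof to compare against; what matters is whether your sketch is itself sound. It is not, because its central tool is unavailable precisely in the setting the paper needs.

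You equip $L$ with the form $\langle x,y\rangle=\Tr_{L/\F_q}(xy)$ and claim $L$ is perfect. But $L$ is a \emph{finite} extension of $F$, and in the paper $F=H_1=\F_\ell(z_1)$ is a rational function field, so $L$ is a transcendental, infinite-degree extension of $\F_q$: the map $\Tr_{L/\F_q}$ simply does not exist, and $L$ is not perfect (e.g.\ $z_1$ has no $q$-th root in any finite extension of $\F_\ell(z_1)$). Consequently the definition $h^*(y)=\sum a_i^{q^{-i}}y^{q^{-i}}$, the adjoint identity $\langle h(x),y\rangle=\langle x,h^*(y)\rangle$, the equation $h^{\rm ad}(y)=h^*(y)^{q^t}$, and the assertion that ``$q^t$-Frobenius is a bijection of $L$'' all break down. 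Your argument is correct if $F$ is a finite field, but the proposition is stated for an arbitrary field $F\supseteq\F_q$, and the case actually used is a function field. A repair would require either constructing a $\mathrm{Gal}(\bar F/F)$-equivariant nondegenerate $\F_q$-bilinear pairing $V\times W\to\F_q$ directly from the coefficients of $h$ (with no ambient trace form), or first passing to the perfect closure $F^{1/p^\infty}$ and checking that this leaves the separable splitting fields of $h$ and $h^{\rm ad}$ unchanged; neither reduction appears in your sketch. Your closing move of invoking \cite[Theorem 1.7.11]{Goss} outright coincides with what the paper does, so the conclusion you reach is safe --- but the duality argument you interpolate before that citation does not apply as written.
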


A direct consequence of this proposition is that the extension of $\F_q(z_1)$ generated by the roots of $f(T)$, is the same as the extension of $\F_q(z_1)$ generated by the roots of
$$(z_1f)^{\rm ad}(T)=-( T^{q^{n-1}}+\cdots+T^{q^{k}}) +z_1^{q^{k-1}}T^{q^{k-1}}+\cdots+z_1^qT^q+z_1T.$$
To relate the roots of $f(T)$ with those of $g(T)$, we will use the following lemma:
\begin{lemma}\label{lem:tracek}
Let $t$ be a root of $g(T)$, then $\Tr_k(t)$ is a root of $(z_1f)^{\rm ad}(T)$.
\end{lemma}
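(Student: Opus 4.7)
The plan is to compute $(z_1f)^{\mathrm{ad}}(\Tr_k(t))$ explicitly and to show it vanishes by invoking $g(t)=0$ together with the commutation of trace polynomials from Lemma~\ref{lem1}(i). The first step is purely bookkeeping on the explicit expression for $(z_1f)^{\mathrm{ad}}$ displayed in the excerpt: grouping the two blocks of monomials, one recognises
$$(z_1f)^{\mathrm{ad}}(T)=\Tr_k(z_1T)-\Tr_{n-k}(T)^{q^k},$$
since $T^{q^k}+T^{q^{k+1}}+\cdots+T^{q^{n-1}}=\Tr_{n-k}(T)^{q^k}$ and $z_1T+z_1^qT^q+\cdots+z_1^{q^{k-1}}T^{q^{k-1}}=\Tr_k(z_1T)$.

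Next I would set $s:=\Tr_k(t)$ and substitute $T=s$, giving
$$(z_1f)^{\mathrm{ad}}(s)=\Tr_k(z_1\Tr_k(t))-\Tr_{n-k}(\Tr_k(t))^{q^k}.$$
The assumption $g(t)=0$ reads $z_1\Tr_k(t)=\Tr_{n-k}(t)^{q^k}$; feeding this into the first summand and pulling the Frobenius $q^k$-power out of $\Tr_k$ (which is legitimate because $\Tr_k$ has coefficients in $\mathbb F_p$), I obtain
$$\Tr_k(z_1\Tr_k(t))=\Tr_k\bigl(\Tr_{n-k}(t)^{q^k}\bigr)=\Tr_k(\Tr_{n-k}(t))^{q^k}.$$

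Finally, Lemma~\ref{lem1}(i) yields $\Tr_k(\Tr_{n-k}(t))=\Tr_{n-k}(\Tr_k(t))=\Tr_{n-k}(s)$, so the two summands coincide and $(z_1f)^{\mathrm{ad}}(s)=0$, which is exactly the claim. There is no real obstacle here; the computation is entirely formal once the identity $(z_1f)^{\mathrm{ad}}(T)=\Tr_k(z_1T)-\Tr_{n-k}(T)^{q^k}$ is recognised, and the only nontrivial input is the commutativity of $\Tr_k$ and $\Tr_{n-k}$ supplied by Lemma~\ref{lem1}(i).
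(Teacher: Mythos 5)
Your argument is correct and is essentially the paper's own proof, merely presented in the opposite direction: the paper applies $\Tr_k$ to the identity $\Tr_{n-k}(t)^{q^k}=z_1\Tr_k(t)$ and reads off that $\Tr_k(t)$ annihilates $(z_1f)^{\rm ad}$, while you start from $(z_1f)^{\rm ad}(\Tr_k(t))=\Tr_k(z_1\Tr_k(t))-\Tr_{n-k}(\Tr_k(t))^{q^k}$ and simplify it to zero using the same two facts (the relation $g(t)=0$ and the commutativity from Lemma~\ref{lem1}). The explicit identification $(z_1f)^{\rm ad}(T)=\Tr_k(z_1T)-\Tr_{n-k}(T)^{q^k}$ and the pulling of the $q^k$-Frobenius through $\Tr_k$ are both correct, and the whole computation matches the paper's.
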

\begin{proof}
Since $g(t)=0$, we have $\Tr_{n-k}(t)^{q^{k}}=z_1\Tr_k(t)$. Applying $\Tr_k$ and using Lemma \ref{lem1}, we obtain
$$\Tr_{n-k}(\Tr_k(t))^{q^{k}}=\Tr_k(z_1 \Tr_k(t)).$$ This proves that $\Tr_k(t)$ is a root of $(z_1f)^{\rm ad}(T)$.
\end{proof}

\begin{proposition}\label{thm:splittingfields}
The splitting fields of the polynomials $f(T)$ and $g(T)$ over $H_1$ are the same.
\end{proposition}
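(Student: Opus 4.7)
The plan is to establish the two inclusions between the splitting fields separately. As setup, note that both $f(T)$ and $g(T)$ are separable $q$-additive polynomials of $q$-degree $n-1$: differentiating, one finds derivatives equal to the nonzero constants $-z_1^{-1}$ and $-z_1$ respectively, since all the other monomials appearing are $p$-th powers. Hence the root sets of $f$ and of $g$ in an algebraic closure of $H_1$ are $\F_q$-vector spaces of dimension $n-1$.

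For the inclusion that the splitting field of $f$ lies inside the splitting field of $g$, I would combine Lemma~\ref{lem:tracek} with Proposition~\ref{lem:goss}. Lemma~\ref{lem:tracek} provides an $\F_q$-linear map $\Phi\colon t \mapsto \Tr_k(t)$ from the roots of $g$ into the roots of $(z_1 f)^{\rm ad}$. I would check that $\Phi$ is injective: if $\Phi(t) = 0$, then $g(t) = 0$ forces $\Tr_{n-k}(t)^{q^k} = z_1\Tr_k(t) = 0$, hence $\Tr_{n-k}(t) = 0$ as well; by Lemma~\ref{lem1}(ii), combined with the hypothesis $\gcd(k,n-k)=1$, the simultaneous vanishing of both traces forces $t = 0$. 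Dimension counting promotes $\Phi$ to a bijection, so every root of $(z_1 f)^{\rm ad}$ already lies in the splitting field of $g$. Proposition~\ref{lem:goss} identifies the splitting field of $(z_1 f)^{\rm ad}$ with that of $z_1 f$, which coincides with that of $f$, giving the desired inclusion.

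For the reverse inclusion, I would prove a dual analog of Lemma~\ref{lem:tracek}: if $u$ is a root of $f(T)$, then $\Tr_{n-k}(u)$ is a root of $(z_1^{-1}g)^{\rm ad}(T)$. The argument mirrors the proof of Lemma~\ref{lem:tracek}: rewrite $f(u) = 0$ as $\Tr_k(u)^{q^{n-k}} = z_1^{-1}\Tr_{n-k}(u)$, apply $\Tr_{n-k}$ to both sides, and use that $\Tr_{n-k}$ commutes with the $q^{n-k}$-Frobenius and with $\Tr_k$ (Lemma~\ref{lem1}(i)) to repackage the result as vanishing of $(z_1^{-1}g)^{\rm ad}$ on $\Tr_{n-k}(u)$. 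The same kernel–dimension argument, using once more $\gcd(k,n-k)=1$ and Lemma~\ref{lem1}(ii), shows this dual map is a bijection on root sets. A second application of Proposition~\ref{lem:goss}, now to $z_1^{-1}g$ and $(z_1^{-1}g)^{\rm ad}$, then yields the opposite inclusion.

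The only non-routine step is the careful bookkeeping of the factors of $z_1$ in the dual computation, since $\Tr_{n-k}$ does not commute with multiplication by $z_1$. An attractive shortcut is to invoke the symmetry with the dual tower $\hat{\mathcal H}$ (which swaps $k \leftrightarrow n-k$ and $a \leftrightarrow b$): under this duality the polynomials $f$ and $g$ exchange roles up to a scalar factor of $z_1$, so the reverse inclusion reduces to a verbatim application of Lemma~\ref{lem:tracek} in the dual setup, and one avoids redoing the calculation.
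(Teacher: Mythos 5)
Your proposal is correct, and it takes a genuinely different route from the paper's. The paper proves only one statement, namely that the $\F_q$-linear map $\psi\colon V \to W$, $t \mapsto \Tr_k(t)$, from the root space of $g$ to the root space of $(z_1 f)^{\rm ad}$ has trivial kernel, and then asserts that the proposition follows. The unstated mechanism behind that assertion is Galois equivariance: since $\psi$ is given by a polynomial with $\F_q$-coefficients, it commutes with the absolute Galois group of $H_1$, so a bijective $\psi$ forces the pointwise stabilizers of $V$ and $W$ to coincide, hence $H_1(V)=H_1(W)$. You instead establish the two inclusions of splitting fields separately. Your forward direction (injectivity of $\psi$ plus a dimension count, then Proposition~\ref{lem:goss} applied to $z_1 f$) matches the paper's computation. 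Your reverse direction introduces the dual trace map $u\mapsto\Tr_{n-k}(u)$ sending roots of $f$ to roots of $(z_1^{-1}g)^{\rm ad}$, which is indeed correct: applying $\Tr_{n-k}$ to $\Tr_k(u)^{q^{n-k}}=z_1^{-1}\Tr_{n-k}(u)$ and using commutativity of traces gives exactly $(z_1^{-1}g)^{\rm ad}(\Tr_{n-k}(u))=0$, and the same kernel argument applies (if $\Tr_{n-k}(u)=0$ then $f(u)=0$ forces $\Tr_k(u)=0$, hence $u=0$ by $\gcd(k,n-k)=1$). A second invocation of Proposition~\ref{lem:goss}, now for $z_1^{-1}g$, then closes the loop. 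Your duality shortcut is also sound: swapping $k\leftrightarrow n-k$ and $z_1\leftrightarrow z_1^{-1}$ carries $f$ to $g$, so the dual lemma is literally Lemma~\ref{lem:tracek} in the dual setup. What your symmetric approach buys is that it spells out both inclusions with explicit polynomial maps and avoids the implicit Galois-theoretic step; what the paper's one-sided approach buys is brevity, at the cost of leaving that final equality-of-fields step to the reader.
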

\begin{proof}
Using Proposition \ref{lem:goss} we are done once we show that the roots of $g(T)$ and $(z_1f)^{\rm ad}(T)$ generate the same extension. We denote by $V$, respectively $W$, the $\F_q$-vector space consisting of the $q^{n-1}$ roots of $g(T)$, respectively of $(z_1f)^{\rm ad}(T)$. Lemma \ref{lem:tracek} gives rise to an $\F_q$-linear map $\psi$ from $V$ to $W$ defined by $\psi(t)=\Tr_k(t)$. The proposition follows if we show that the map $\psi$ has trivial kernel. Suppose therefore that $\Tr_k(t)=0$. Since $g(t)=0$ as well, one obtains that $\Tr_{n-k}(t)=0$. Using Equations \eqref{expleuclid} and \eqref{eq1}, we see that $t=0$.
\end{proof}

\begin{remark}\label{rem:splittingingalois}
As an immediate consequence of Proposition \ref{prop:galoisclosurestep1} and Proposition \ref{thm:splittingfields} we see that all roots of $f(T)$ and $g(T)$ are contained in $N_i$ for $i \ge 2$.
\end{remark}

These facts will be used to determine the ramification behavior in the tower $\mathcal{N}$. Let $P$ be a place of $H_1$ ramified in $N_i/H_1$. Since the sets of places of $H_1$ that ramify in $N_i/H_1$ and $H_i/H_1$ agree, $P$ is either the pole or the zero of $z_1$ by Proposition~\ref{prop:ramificationH}. Let $\tilde{Q}$ be a place of $N_i$ lying above such a place $P$. We have the following proposition about the ramification of $\tilde{Q}|P$:

\begin{proposition}\label{prop:ramificationbehavior}
Completing $N_i$ at $\tilde{Q}$, there exists an intermediate field $L$ of $\widehat{N_i}/\widehat{N_1}$ such that the extension $L/\widehat{N_1}$ is cyclic and the extension $\widehat{N_i}/L$ is a $2$-bounded $p$-extension. If $P$ is the zero of $z_1$, then $[L:\widehat{N_1}]=q^k-1$. If $P$ is the pole of $z_1$, we have  $[L:\widehat{N_1}]=q^{n-k}-1$.
\end{proposition}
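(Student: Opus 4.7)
My approach is to realize $\widehat{N_i}/\widehat{N_1}$ as the composite of the completions $\widehat{\sigma(H_i)}$, where $\sigma$ ranges over the embeddings of $H_i$ over $H_1$, and to lift the decomposition ``tame cyclic piece followed by a weakly ramified $p$-piece'' supplied by Proposition \ref{prop:ramificationH} from each conjugate to the whole composite. The essential background fact, available because $K=\overline{\F}_\ell$ is algebraically closed, is that inside any fixed algebraic closure of $\widehat{N_1}$ there is a \emph{unique} tame cyclic extension of each prescribed degree coprime to $p$ (this is Kummer theory combined with Hensel's lemma, using that every unit becomes an $m$-th power for $\gcd(m,p)=1$ over an algebraically closed residue field).

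Case $P=P_0$: I would apply Proposition~\ref{prop:ramificationH}(1) to each conjugate $\sigma(H_i)$ to obtain an intermediate field $L_\sigma\subset\widehat{\sigma(H_i)}$ with $L_\sigma/\widehat{N_1}$ cyclic of degree $q^k-1$ and $\widehat{\sigma(H_i)}/L_\sigma$ realized as a tower of $2$-bounded elementary abelian $p$-extensions. By the uniqueness mentioned above, all $L_\sigma$ coincide inside $\widehat{N_i}$ with a single subfield $L$. Then $\widehat{N_i}/L$, as the compositum of the $p$-extensions $\widehat{\sigma(H_i)}/L$, is itself a $p$-extension, which forces $[L:\widehat{N_1}]=q^k-1$ exactly and identifies $L$ with the fixed field of the wild inertia subgroup of $\mathrm{Gal}(\widehat{N_i}/\widehat{N_1})$. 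The case $P=P_\infty$ is parallel, working with cyclic degree $q^{n-k}-1$ and Proposition \ref{prop:ramificationH}(2); the subtlety that the statement there invokes a root $t_0$ of $g(T)$ is resolved by Remark \ref{rem:splittingingalois}, which guarantees $t_0\in N_i$ already for $i\ge 2$, so no further extension of the ambient tower is needed and the conjugates $\widehat{\sigma(H_i)(\sigma(t_0))}$ all sit inside $\widehat{N_i}$.

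What remains, and what I see as the main obstacle, is verifying that $\widehat{N_i}/L$ is $2$-bounded. For this I would invoke the standard principle that weak ramification of Galois $p$-extensions is preserved under composites, which follows from the compatibility of upper-numbering ramification groups with quotients together with the Hasse--Arf theorem. Applied to the tower of weakly ramified elementary abelian Galois steps realizing each $\widehat{\sigma(H_i)}/L$, this yields that the Galois closure of $\widehat{\sigma(H_i)}$ over $L$ inside $\widehat{N_i}$ is a weakly ramified Galois $p$-extension; composing over all $\sigma$ recovers $\widehat{N_i}/L$ itself, still weakly ramified and hence $2$-bounded. The bookkeeping burden here is to pass each non-Galois intermediate tower supplied by Proposition \ref{prop:ramificationH} through its Galois closure before invoking the composite principle, but once this is done the argument is routine.
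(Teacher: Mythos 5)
Your proposal matches the paper's proof in all essentials: decompose each conjugate completion $\widehat{\sigma(H_i)}/\widehat{H_1}$ via Proposition~\ref{prop:ramificationH} into a tame cyclic piece followed by a weakly ramified $p$-tower, take composites, merge the tame pieces into a single $L$, and conclude that $\widehat{N_i}/L$ is a weakly ramified (hence $2$-bounded) $p$-extension; the $P_\infty$ case is handled exactly as in the paper, by inserting $t_0$, whose presence in $N_i$ is guaranteed by Remark~\ref{rem:splittingingalois}. The only divergence is cosmetic: where you re-derive the merging of tame parts from Kummer theory over an algebraically closed residue field and re-derive composite stability of weak ramification from upper-numbering compatibility with quotients (the appeal to Hasse--Arf is actually superfluous here), the paper simply invokes Abhyankar's lemma together with \cite[Proposition 12]{galclos}, which packages precisely the Galois-closure and composite bookkeeping you flagged as the remaining burden.
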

\begin{proof}
Denote by $Q_1,\dots,Q_s$ be the restrictions of $\tilde{Q}$ to the various conjugates $\sigma_1(H_i),\dots,\sigma_s(H_i)$ of $H_i$. We will consider the two cases $z_1(P)=0$ and $z_1(P)=\infty$ separately.

\bigskip
\noindent
\emph{Case i) $z_1(P)=0$:}

\noindent
From the first part of Proposition \ref{prop:ramificationH} we see that after completion at $\tilde{Q}$, the extensions $\widehat{\sigma_j(H_i)}/\widehat{H_1}$ all can be divided into a cyclic part of degree $q^k-1$ and steps of $2$-bounded elementary abelian $p$-extensions. Taking composites we see (using Abhyankar's lemma and \cite[Proposition 12]{galclos}) that there exists a field $L\subset \widehat{N_i}$ such that the extension $L/\widehat{H_1}$ is cyclic of degree $q^k-1$ and such that the extension $\widehat{N_i}/L$ can be divided into $2$-bounded elementary abelian $p$-extensions.

\bigskip
\noindent
\emph{Case ii) $z_1(P)=\infty$:}

\noindent Let $t_0$ be a nonzero root of $g(T)$. By Remark \ref{rem:splittingingalois} the element $t_0$ is contained in $N_2$ and hence in $N_i$. Let $P'$ be a place of $H_1(t_0)$ lying above $P$ such that $t_0(P')=\infty$ and $\tilde{R}$ a place of $N_i$ lying above $P'$. We denote the restrictions of $\tilde{R}$ to the conjugates $\sigma_1(H_i),\dots,\sigma_s(H_i)$ of $H_i$ by $R_1,\dots,R_s$ and the restrictions to $\sigma_1(H_i(t_0)),\dots,\sigma_s(H_i(t_0))$ by $R_1',\dots,R_s'$. The second part of Proposition \ref{prop:ramificationH} implies that after completion at $\tilde{R}$, the extensions $\widehat{\sigma_j(H_i(t_0))}/\widehat{H_1}$ all can be divided into a cyclic part of degree $q^{n-k}-1$ and steps of $2$-bounded elementary abelian $p$-extensions. Again, using Abhyankar's lemma and \cite[Proposition 12]{galclos}, we obtain the desired result for the place $\tilde{R}$. Since $N_i/H_1$ is a Galois extension and $\tilde{Q}$ and $\tilde{R}$ lie above the same place $P$ of $H_1$, the same holds for $\tilde{Q}$.
\end{proof}

\begin{proposition}
Let $e_i(P_0)$ and $e_i(P_\infty)$ denote the ramification indices in the extension $N_i/N_1$ of the places $P_0$ and $P_\infty$ respectively. Then for $i>1$ we have
$$e_i(P_0)=(q^k-1)q^{(i-1)(n-k)-k}p^{\epsilon_1(i)}$$
and
$$e_i(P_\infty)=(q^{n-k}-1)q^{(i-1)(n-k)}p^{\epsilon_2(i)}$$
with $\epsilon_1(i),\epsilon_2(i) \ge 0$.
\end{proposition}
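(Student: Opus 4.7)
The plan is to isolate the tame part of the ramification using Proposition~\ref{prop:ramificationbehavior} and to pin down the $q$-power part by exploiting that the Galois closure $N_i$ sits over the non-Galois intermediate field $H_i$, whose ramification is already described in Proposition~\ref{prop:ramificationH}.

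First I would apply Proposition~\ref{prop:ramificationbehavior} to a place $\tilde Q$ of $N_i$ above $P_0$ (respectively $P_\infty$). It produces a local tower $\widehat{N_1}\subseteq L\subseteq\widehat{N_i}$ whose tame part $L/\widehat{N_1}$ is cyclic of degree $q^k-1$ (respectively $q^{n-k}-1$) and whose top part $\widehat{N_i}/L$ is a $p$-extension. Reading off degrees already gives
\[
e_i(P_0)=(q^k-1)\cdot p^{a_i}\qquad\text{and}\qquad e_i(P_\infty)=(q^{n-k}-1)\cdot p^{b_i}
\]
for some $a_i,b_i\geq 0$, so the remaining task is to extract a lower bound on the $q$-power concealed inside $p^{a_i}$ and $p^{b_i}$.

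For $P_\infty$ I would invoke case~(2) of Proposition~\ref{prop:ramificationH}, which produces a place $Q$ of $H_i$ with $e(Q|P_\infty)=q^{(n-k)(i-1)}$. Picking any $\tilde Q$ in $N_i$ above $Q$ and using multiplicativity of ramification indices in the chain $\widehat{H_1}\subseteq\widehat{H_i}\subseteq\widehat{N_i}$, together with the Galois invariance of $e_i(P_\infty)$ (it does not depend on the choice of $\tilde Q$), one concludes that $q^{(n-k)(i-1)}$ divides $e_i(P_\infty)$. Since $\gcd(q^{n-k}-1,p)=1$, this divisibility is absorbed into $p^{b_i}$ and yields the claimed form $e_i(P_\infty)=(q^{n-k}-1)\,q^{(i-1)(n-k)}\,p^{\epsilon_2(i)}$ with $\epsilon_2(i)\geq 0$.

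For $P_0$ the strategy is identical, but the input is case~(1) of Proposition~\ref{prop:ramificationH}. I would choose the extreme place $Q$ of $H_i$ with $m=i-1$, so that $z_1(Q)=\cdots=z_{i-1}(Q)=0$ and $z_i(Q)=\infty$, and revisit the stepwise local analysis behind \cite[Propositions 3.5 and 3.6]{BBGS} in order to check that the wild contribution $e(Q|P_0)/(q^k-1)$ is divisible by $q^{(i-1)(n-k)-k}$, with a factor $q^{n-k}$ accumulated at each of the $(i-2)$ zero-to-zero steps and the remaining $q^{n-2k}$-correction coming from the final zero-to-infinity transition, where part of the ramification is taken up by the tame cyclic piece of degree $q^k-1$. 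Transferring the resulting $q$-divisibility to $N_i$ exactly as in the $P_\infty$-case and using $\gcd(q^k-1,p)=1$ then completes the proof. I expect this last verification to be the main obstacle, because Proposition~\ref{prop:ramificationH}(1) describes the wild part only qualitatively, so the precise $q$-power must be read off from the local computations underlying it rather than from the statement itself.
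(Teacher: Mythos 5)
Your treatment of $P_\infty$ matches the paper's: extract the tame factor $q^{n-k}-1$ from Proposition~\ref{prop:ramificationbehavior}, note that every place of $H_i$ above $P_\infty$ has ramification index $q^{(n-k)(i-1)}$ by Proposition~\ref{prop:ramificationH}(2), and transfer this divisibility to $N_i$ via $e(Q|P_\infty)\mid e(\tilde Q|P_\infty)=e_i(P_\infty)$ together with $\gcd(q^{n-k}-1,q)=1$. That part is fine.

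For $P_0$ there is a real gap, and you correctly anticipate where it lies. The high-level strategy (pick a suitable place $Q$ of $H_i$ above $P_0$, extract a $q$-power from $e(Q|P_0)$, and push it into $e_i(P_0)$ using Galois invariance and coprimality with $q^k-1$) is the same as the paper's, but the paper does not pick $m=i-1$: it picks $m=1$, i.e.\ the place with $z_1(\tilde Q)=0$ and $z_2(\tilde Q)=\cdots=z_i(\tilde Q)=\infty$. This choice is not arbitrary — the paper remarks it is seeking the \emph{highest} ramification index among places of $H_i$ over $P_0$, and $m=1$ is the case whose local ramification chain was already computed explicitly in \cite[Figures 9 and 11]{BBGS}. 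From those figures the paper reads off $e(Q_2|P_0)=q^{n-k-1}(q^k-1)$, $e(S_i|S_1)=q^{(i-2)(n-k)}$ and $e(Q_2|S_1)=q^{k-1}$, and then deduces $q^{(i-2)(n-k)-k+1}\mid e(Q|Q_2)$ from the chain equalities together with the fact that $\widehat{KH_2}/G_1$ and $G_i/G_1$ are $p$-extensions; multiplying out gives the stated $q$-power. Your proposal, by contrast, relies on a heuristic ramification count for the $m=i-1$ place ($q^{n-k}$ per zero-to-zero step plus a $q^{n-2k}$ correction) that is not supported by anything in Proposition~\ref{prop:ramificationH}(1), which is only qualitative, and is not covered by the explicit figures in \cite{BBGS}. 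You acknowledge this is the main obstacle, and indeed it is: without establishing $q^{(i-1)(n-k)-k}\mid e(Q|P_0)$ for your chosen $Q$ — and it is not even clear that $m=i-1$ is the correct choice, since the zero-to-zero steps may carry far less wild ramification than you guess — the $P_0$ half of the proposition is not proved.
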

\begin{proof}
We first consider the case of the place $P_0$. We will give a lower bound for the ramification by estimating the highest ramification index among all places of $H_i$ lying over $P_0$. Since $N_i/H_1$ is a Galois extension, the ramification index $e(\tilde{Q}|P_0)$ does not depend on the choice of the place $\tilde{Q}$ of $N_i$ lying over $P_0$. Without loss of generality we may therefore assume that $z_2(\tilde{Q})=\infty$.

Let $Q$ be the restriction of $\tilde{Q}$ to $H_i$ and extend the constant field to $K:=\overline{\mathbb{F}_\ell}$. We will use the notation from \cite{BBGS}, especially the notation occurring in Figures 9 and 11 there. There the fields $KH_i$ were completed at $Q$ and an intermediate field $G_1$ of $\widehat{KH_2}/\widehat{K(z_2)}$ was introduced such that the extension $G_1/\widehat{K(z_2)}$ is cyclic of degree $q^{n-k}-1$, while the extension $\widehat{KH_2}/G_1$ is a $2$-bounded Galois $p$-extension. Finally the field $G_i=G_1\widehat{KH_i}$ was defined.

Now let us denote by $Q_2$ the restriction of $Q$ to $\widehat{KH_2}$. We obtain from \cite[Figures 9 and 11]{BBGS} that $$e(Q|P_0)=e(Q|Q_2)e(Q_2|P_0)=e(Q|Q_2)q^{n-k-1}(q^k-1).$$
Further denote the restrictions of $Q$ to $G_i$ by $S_i$. Also by \cite[Figures 9 and 11]{BBGS} we have $e(S_i|S_1)=q^{(i-2)(n-k)}$ and $e(Q_2|S_1)=q^{k-1}$. Since
$$e(Q|Q_2)q^{k-1}=e(Q|Q_2)e(Q_2|S_1)=e(Q|S_1)=e(Q|S_i)e(S_i|S_1)=e(Q|S_i)q^{(i-2)(n-k)},$$ and the extensions $\widehat{KH_2}/G_1$ and $G_i/G_1$ are $2$-bounded Galois $p$-extensions, we obtain that $e(Q|Q_2)$ is the product of $q^{(i-2)(n-k)-k+1}$ with a power of the characteristic $p$. Combining the above, we see that $e(Q|P_0)$ is a power of $p$ times $(q^k-1)q^{(i-1)(n-k)-k}$. This proves first part of the proposition.

For the place $P_{\infty}$, we see from Proposition~\ref{prop:ramificationbehavior} that $(q^{n-k}-1)|e_i(P_\infty)$. On the other hand, since any place of $H_i$ lying above $P_{\infty}$ has ramification index $(q^{n-k})^{i-1}$, we have $q^{(n-k)\cdot (i-1)}|e_i(P_\infty)$. Hence $(q^{n-k}-1)\cdot q^{(n-k)(i-1)}$ divides $e_i(P_\infty)$.
\end{proof}

\begin{remark}\label{remark:ram}
Note that by Proposition \ref{prop:ramificationbehavior} the extension $N_i/H_1$ is weakly ramified.
\end{remark}

\begin{proposition}\label{prop:genusHtilde}
We have $$\frac{g(N_i)-1}{[N_i:N_1]} \le \frac12 \cdot \left( \frac{1}{q^k-1}+\frac{1}{q^{n-k}-1} \right).$$
\end{proposition}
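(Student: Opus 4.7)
The plan is to apply the Hurwitz genus formula to the Galois extension $N_i/N_1$. Since $N_1=\F_\ell(z_1)$ is rational, $g(N_1)=0$, so
\[
2g(N_i)-2 \;=\; -2[N_i:N_1] \,+\, \deg \mathrm{Diff}(N_i/N_1),
\]
and dividing by $2[N_i:N_1]$ reduces the claim to
\[
\frac{\deg \mathrm{Diff}(N_i/N_1)}{[N_i:N_1]} \;\le\; 2 \,+\, \frac{1}{q^k-1} \,+\, \frac{1}{q^{n-k}-1}.
\]
So everything comes down to bounding the different. The ramified places of $N_1$ in $N_i$ are only $P_0$ and $P_\infty$ (Proposition~\ref{prop:ramificationbehavior}), both of degree one, and by Remark~\ref{remark:ram} the extension $N_i/N_1$ is weakly ramified.

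For any place $\tilde Q$ of $N_i$ lying over $P\in\{P_0,P_\infty\}$, weak ramification means $G_r(\tilde Q|P)=\{e\}$ for $r\ge 2$, so by the classical formula $d(\tilde Q|P)=\sum_{r\ge 0}(|G_r|-1)$ we get
\[
d(\tilde Q|P) \;=\; (e(\tilde Q|P)-1) \,+\, (|G_1(\tilde Q|P)|-1).
\]
The group $G_1$ is the (unique) $p$-Sylow subgroup of the inertia group $G_0$, so $|G_1|$ is exactly the $p$-part of $e(\tilde Q|P)$. From the explicit formulas of the preceding proposition, $e_i(P_0)=(q^k-1)\cdot(p\text{-power})$ and $e_i(P_\infty)=(q^{n-k}-1)\cdot(p\text{-power})$, so the prime-to-$p$ part of $e(\tilde Q|P_0)$ is $q^k-1$ and that of $e(\tilde Q|P_\infty)$ is $q^{n-k}-1$. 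Writing $e=e(\tilde Q|P)$ and $e_p=|G_1|$, we therefore obtain
\[
\frac{d(\tilde Q|P_0)}{e(\tilde Q|P_0)} \;\le\; 1 + \frac{1}{q^k-1}, \qquad \frac{d(\tilde Q|P_\infty)}{e(\tilde Q|P_\infty)} \;\le\; 1 + \frac{1}{q^{n-k}-1}.
\]

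Finally, summing over all $\tilde Q|P$: since $N_i/N_1$ is Galois, all $\tilde Q$ above a fixed $P$ share common invariants $e,f,d$, there are $[N_i:N_1]/(ef)$ of them, and each has degree $f$ (using $\deg P=1$). Hence the contribution of $P$ to the different satisfies
\[
\sum_{\tilde Q|P} d(\tilde Q|P) \deg\tilde Q \;=\; \frac{[N_i:N_1]}{e}\cdot d \;\le\; [N_i:N_1]\cdot\Bigl(1+\tfrac{1}{q^k-1}\Bigr) \quad\text{or}\quad [N_i:N_1]\cdot\Bigl(1+\tfrac{1}{q^{n-k}-1}\Bigr)
\]
for $P=P_0$ or $P=P_\infty$, respectively. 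Adding the two and substituting into the Hurwitz expression yields precisely the claimed bound.

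\textbf{Main obstacle.} There is no real obstacle: the heavy lifting was done in Propositions~\ref{prop:ramificationbehavior} and the preceding proposition, which together identify the ramification locus, establish weak ramification, and pin down the tame (prime-to-$p$) part of the ramification index at each ramified place. Given those facts the argument is a direct Hurwitz calculation; the one mildly delicate point is recalling that weak ramification yields $d=(e-1)+(e_p-1)$ rather than the sharper $d=2(e-1)$ (which would require the extension to be a $p$-extension, i.e.\ $e=e_p$), and then noticing that $e_p/e$ is exactly the reciprocal of the relevant tame factor $q^k-1$ or $q^{n-k}-1$.
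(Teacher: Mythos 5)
Your proof is correct and follows essentially the same strategy as the paper: apply Riemann--Hurwitz to $N_i/N_1$, note that only $P_0,P_\infty$ ramify, and bound $d(\tilde Q|P)/e(\tilde Q|P)$ by $1+\frac{1}{q^k-1}$ resp.\ $1+\frac{1}{q^{n-k}-1}$ using the tame degree and weak ramification. The only cosmetic difference is how you compute the different exponent: you invoke Hilbert's formula $d=\sum_{r\ge 0}(|G_r|-1)=(e-1)+(e_p-1)$ directly (using that $G_1$ is the $p$-Sylow of $G_0$ and $G_2=\{e\}$), together with the fact from the preceding proposition that the prime-to-$p$ part of $e(\tilde Q|P_0)$ is exactly $q^k-1$; the paper instead uses transitivity of the different through the intermediate field $L$ of Proposition~\ref{prop:ramificationbehavior}, computing $d(\tilde Q|P_0)=e(\tilde Q|S)(q^k-2)+2e(\tilde Q|S)-2$. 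Both routes yield the identical expression $d=e+e_p-2$ and the same bound, so the content is the same.
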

\begin{proof}
Denote by $P_0$ (respectively $P_\infty$) the zero (respectively pole) of $z_1$ in $H_1$. We will use the Riemann-Hurwitz formula to estimate the genus of $N_i$. Since only the pole and zero of $z_1$ ramify in the extension $N_i/H_1$, we only need to estimate the different of these places in the extension. Let $\tilde{Q}$ be a place of $N_i$ lying above $P_0$. After completing denote by $S$ the restriction of $\tilde{Q}$ to the intermediate field $L$ from Proposition \ref{prop:ramificationbehavior}. We obtain that
$$e(\tilde{Q}|P_0)=e(\tilde{Q}|S)e(S|P_0)=e(\tilde{Q}|S)(q^k-1)$$
and
$$d(\tilde{Q}|P_0)=e(\tilde{Q}|S)d(S|P_0)+d(\tilde{Q}|S)=e(\tilde{Q}|S)(q^k-2)+2e(\tilde{Q}|S)-2=q^k e(\tilde{Q}|S)-2.$$
Similarly for a place $\tilde Q$ above $P_\infty$ we find
$$e(\tilde{Q}|P_\infty)=e(\tilde{Q}|S)(q^{n-k}-1)$$
and
$$d(\tilde{Q}|P_\infty)=e(\tilde{Q}|S)(q^{n-k}-2)+2e(\tilde{Q}|S)-2=q^{n-k} e(\tilde{Q}|S)-2.$$
We see that
\begin{equation}
\label{boundedzero}
\frac{d(\tilde{Q}|P_0)}{e(\tilde{Q}|P_0)}\le 1+\frac{1}{q^k-1}
\end{equation}
and
\begin{equation}
\label{boundedinfinity}
\frac{d(\tilde{Q}|P_\infty)}{e(\tilde{Q}|P_\infty)}\le 1+\frac{1}{q^{n-k}-1}.
\end{equation}
Using Equations \eqref{boundedzero} and \eqref{boundedinfinity} together with the Riemann-Hurwitz genus formula and the fundamental equality for the extension $N_i/H_1$, the result follows.
\end{proof}

\bigskip

We immediately obtain the following:

\begin{corollary}\label{cor:limit}
The limit of the tower $\mathcal{N}$ satisfies
$$\lambda(\mathcal{N}) \ge 2\left(\frac{1}{q^k-1}+\frac{1}{q^{n-k}-1}\right)^{-1}.$$
\end{corollary}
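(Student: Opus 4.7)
The plan is that this corollary follows almost immediately from Proposition \ref{prop:genusHtilde} combined with a lower bound on $N(N_i)$ produced by a completely split rational place of $N_1$.

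First I would establish the lower bound on $N(N_i)$. By Proposition \ref{prop:splittingH} the rational place $[z_1=-1]$ of $H_1=N_1$ splits completely throughout the tower $\mathcal{H}$, hence in every finite extension $H_i/H_1$. Since the Galois closure $N_i$ of $H_i/H_1$ is generated by the $H_1$-conjugates $\sigma(H_i)$, and complete splitting passes through conjugation and composita (all decomposition groups being trivial), the place $[z_1=-1]$ also splits completely in $N_i/N_1$. This yields $[N_i:N_1]$ distinct rational places of $N_i$ above it, so $N(N_i)\ge [N_i:N_1]$.

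Next I would plug this into the genus bound of Proposition \ref{prop:genusHtilde}, which can be rewritten as
\begin{equation*}
g(N_i)\le 1+\frac{[N_i:N_1]}{2}\left(\frac{1}{q^k-1}+\frac{1}{q^{n-k}-1}\right).
\end{equation*}
Combining the two inequalities gives
\begin{equation*}
\frac{N(N_i)}{g(N_i)}\ge\frac{[N_i:N_1]}{1+\frac{[N_i:N_1]}{2}\left(\frac{1}{q^k-1}+\frac{1}{q^{n-k}-1}\right)}.
\end{equation*}
Because $H_i\subseteq N_i$ and $[H_i:H_1]\to\infty$ along the tower, we have $[N_i:N_1]\to\infty$, so letting $i\to\infty$ the right-hand side tends to $2\left(\tfrac{1}{q^k-1}+\tfrac{1}{q^{n-k}-1}\right)^{-1}$, which is exactly the asserted bound on $\lambda(\mathcal{N})$.

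There is no serious obstacle here: Proposition \ref{prop:genusHtilde} does all the heavy lifting and the statement is really a bookkeeping step. The only point requiring a one-line justification is the transfer of complete splitting from $H_i/H_1$ to its Galois closure $N_i/N_1$, which is standard. I would also remark that since $N_1=H_1$ no constant-field extension is hidden in the passage to the Galois closure, so $\mathbb{F}_\ell$-rational places of $N_1$ do stay $\mathbb{F}_\ell$-rational under complete splitting.
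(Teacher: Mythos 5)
Your proof is correct and follows essentially the same route as the paper: the paper's proof of Corollary~\ref{cor:limit} also cites Proposition~\ref{prop:splittingH} for complete splitting of $[z_1=-1]$ in $\mathcal{N}$ and Proposition~\ref{prop:genusHtilde} for the genus bound, then concludes. You have merely spelled out the standard bookkeeping (transfer of complete splitting to the Galois closure, $N(N_i)\ge[N_i:N_1]$, and the limit computation) that the paper leaves implicit.
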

\begin{proof}
By Proposition \ref{prop:splittingH}, the place $[z_1=-1]$ of $H_1$ splits completely in the tower $\mathcal{H}$ and hence also in the tower $\mathcal{N}$. This together with Proposition \ref{prop:genusHtilde} implies the result.
\end{proof}

\bigskip

At this point we have proved all statements of Theorem~\ref{thm:collecting}, except ii).
\begin{remark}
 Estimates for the limits of the Galois closures $\tilde{\mathcal{E}}$ and $\tilde{\mathcal{F}}$ of the towers $\mathcal E$ and $\mathcal F$ can easily be derived from the above. The lower bound given in Corollary \ref{cor:limit} holds for all of them. More precisely, the tower $\tilde{\mathcal{E}}$ is a subtower of $\mathcal{N}$, since the Galois closure is now taken over $E_1=H_2$. Therefore $\lambda(\tilde{\mathcal{E}}) \ge \lambda(\mathcal{N})$. Lifting the tower $\mathcal{N}$ by adjoining the element $x_1$, gives a Galois tower over $F_1$. By a direct computation, the limit of this lift is easily seen to satisfy the same lower bound as that given for $\lambda(\mathcal{N})$ in Corollary \ref{cor:limit}. Since $\mathcal{\tilde{F}}$ is a subtower of this lifted tower, its limit also satisfies the same lower bound.
\end{remark}

\section{A recursive tower with Galois steps}

In \cite{JNT} and \cite{moscow}, recursive towers over quadratic and cubic finite fields were introduced, where every step is Galois. In this section we obtain an analogous result over any non-prime finite field. More precisely, we construct a recursive subtower $(H'_2\subset H_3' \subset \cdots)$ of the tower $\mathcal{N}$ such that for any $i>1$ the extension $H'_{i+1}/H'_i$ is a Galois extension with elementary abelian $p$-group as Galois group and such that each ramification in $H'_{i+1}/H'_i$ is $2$-bounded.

Starting with the recursive tower $\mathcal H=(H_1 \subset H_2\subset H_3 \subset \cdots)$ as defined in Section~\ref{sec:one} we will introduce an extension field $M/H_1$ such that the composite tower $\mathcal H'=(H_1 \subset H'_2\subset H_3' \subset \cdots)$ with $H'_i=M\cdot H_i$ has Galois steps and its limit satisfies Inequality~\eqref{limitbound}.

Recall that for $i> 0$ we have
$$z_{i}=\frac{\Tr_{n-k}(u_{i})-a}{\Tr_k(u_{i})^{q^{n-k}}-b}=\frac{\Tr_{n-k}(u_{i-1})^{q^k}-a}{\Tr_k(u_{i-1})-b}.$$
Hence $u_{i}$ is a root of the polynomial
\begin{equation}
\label{polui}
\Tr_{n-k}(T)-z_{i}\cdot \Tr_k(T)^{q^{n-k}}-a+z_{i}\cdot b \in \mathbb F_\ell(z_{i})[T].
\end{equation}

The extension $\mathbb F_\ell(u_{i})/\mathbb F_\ell(z_{i})$ is not Galois, but by Proposition \ref{prop:galoisclosurestep1}, the Galois closure of $\mathbb F_\ell(u_{i})/\mathbb F_\ell(z_{i})$ can be obtained by adjoining to $\mathbb F_\ell(u_{i})$ all roots of the polynomial
\begin{eqnarray}
\label{s1}
f_{i}(T) & := & -z_i^{-1}\Tr_{n-k}(T)+\Tr_k(T)^{q^{n-k}}\notag \\
&=&\Tr_n(T)-(1+z_i^{-1})\Tr_{n-k}(T)\notag\\
&=&\Tr_n(T)-\frac{\Tr_n(u_i)-(a+b)}{\Tr_{n-k}(u_i)-a}\Tr_{n-k}(T).
\end{eqnarray}
Similarly, to obtain the Galois closure of the extension $\mathbb F_\ell(u_{i+1})/\mathbb F_\ell(z_{i+1})$, we need to adjoin all roots of
\begin{eqnarray}
\label{s2}
f_{i+1}(T)&=&\Tr_n(T)-(1+z_{i+1}^{-1})\Tr_{n-k}(T)\notag\\
&=&\Tr_n(T)-\frac{\Tr_n(u_i)-(a+b)}{\Tr_{n-k}(u_i)^{q^k}-a}\Tr_{n-k}(T).
\end{eqnarray}
We will show that for each root of $f_{i}(T)$ we get (using $u_i$) a root of the polynomial $f_{i+1}(T)$ and this will give a one-to-one correspondence between roots of $f_i(T)$ and $f_{i+1}(T)$. This implies that by adjoining all roots of $f_{i}(T)$ to a field containing $u_i$, we get all roots of $f_{i+1}(T)$. Hence inductively, we obtain that by lifting the tower $\mathcal H$ by adjoining all roots of $f_1(T)$, we get a tower with Galois steps. First we need a preparatory lemma:

\begin{lemma}\label{lem:prep}
Assume that $s_{i}$ is a root of $f_{i}(T)$, i.e., assume that
$$\Tr_n(s_{i})=\Tr_{n-k}(s_{i})\cdot \frac{\Tr_n(u_{i})-(a+b)}{\Tr_{n-k}(u_{i})-a}.$$
Then we have
\begin{equation}
\label{identity1}
\left(\frac{\Tr_{k}(s_{i})}{\Tr_{k}(u_{i})-b}\right)^{q^{n-k}}=\frac{\Tr_{n-k}(s_{i})}{\Tr_{n-k}(u_{i})-a}
\end{equation}
and
\begin{equation}
\label{identity2}
\Tr_{n-k}(s_{i})^{q^{k}}=\Tr_{n-k}(s_{i})\cdot \frac{\Tr_n(u_{i})-(a+b)}{\Tr_{n-k}(u_{i})-a}-\Tr_{k}(s_{i}).
\end{equation}
\end{lemma}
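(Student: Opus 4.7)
The plan is to derive both identities directly from the defining equation for $s_i$ together with the two natural decompositions of the trace polynomial $\Tr_n$.

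First I would record the two identities
\[
\Tr_n(T)=\Tr_{n-k}(T)+\Tr_k(T)^{q^{n-k}}=\Tr_k(T)+\Tr_{n-k}(T)^{q^k},
\]
both of which follow at once from the definition of $\Tr_n$. Set $c:=\dfrac{\Tr_n(u_i)-(a+b)}{\Tr_{n-k}(u_i)-a}$, so the hypothesis $f_i(s_i)=0$ reads $\Tr_n(s_i)=c\cdot\Tr_{n-k}(s_i)$.

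For identity \eqref{identity2}, I use the second decomposition: $\Tr_{n-k}(s_i)^{q^k}=\Tr_n(s_i)-\Tr_k(s_i)=c\cdot\Tr_{n-k}(s_i)-\Tr_k(s_i)$, which is exactly the claim. This step is essentially free.

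For identity \eqref{identity1}, I use the first decomposition to obtain
\[
\Tr_k(s_i)^{q^{n-k}}=\Tr_n(s_i)-\Tr_{n-k}(s_i)=(c-1)\,\Tr_{n-k}(s_i).
\]
The point then is to rewrite $c-1$. A direct computation gives
\[
c-1=\frac{\Tr_n(u_i)-\Tr_{n-k}(u_i)-b}{\Tr_{n-k}(u_i)-a}=\frac{\Tr_k(u_i)^{q^{n-k}}-b}{\Tr_{n-k}(u_i)-a}.
\]
The only subtle point, and the one I would highlight as the main (but minor) obstacle, is to recognize that the numerator on the right is a $q^{n-k}$-th power: since $b$ is an integer, $b\in\F_p\subseteq\F_{q^{n-k}}$, so $b=b^{q^{n-k}}$ and hence by the Frobenius $\Tr_k(u_i)^{q^{n-k}}-b=(\Tr_k(u_i)-b)^{q^{n-k}}$. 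Substituting back gives
\[
\Tr_k(s_i)^{q^{n-k}}=\frac{(\Tr_k(u_i)-b)^{q^{n-k}}\,\Tr_{n-k}(s_i)}{\Tr_{n-k}(u_i)-a},
\]
which rearranges to identity \eqref{identity1}. Both identities are therefore consequences of the defining equation of $s_i$, the two decompositions of $\Tr_n$, and the fact that $a,b\in\F_p$ are fixed by every power of the Frobenius.
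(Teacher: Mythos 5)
Your proof is correct and follows essentially the same route as the paper's: both use the two decompositions $\Tr_n=\Tr_{n-k}+\Tr_k^{q^{n-k}}=\Tr_k+\Tr_{n-k}^{q^k}$ together with the hypothesis $f_i(s_i)=0$, and both establish the first identity by rewriting $c-1$ as $(\Tr_k(u_i)^{q^{n-k}}-b)/(\Tr_{n-k}(u_i)-a)$ and recognizing the numerator as $(\Tr_k(u_i)-b)^{q^{n-k}}$ since $b\in\F_p$. The only cosmetic difference is that you spell out the Frobenius step $b=b^{q^{n-k}}$, which the paper leaves implicit.
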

\begin{proof}
Since $s_i$ is a root of $f_i(T)$, we have
\begin{eqnarray*}
\Tr_{k}(s_{i})^{q^{n-k}}&=&\Tr_{n-k}(s_{i})\cdot \left( \frac{\Tr_n(u_{i})-(a+b)}{\Tr_{n-k}(u_{i})-a}-1\right)\\
&=&\Tr_{n-k}(s_{i})\cdot  \frac{\Tr_{k}(u_{i})^{q^{n-k}}-b}{\Tr_{n-k}(u_{i})-a}.
\end{eqnarray*}
This implies Equation \eqref{identity1}. Equation \eqref{identity2} follows, since
$$\Tr_{k}(s_{i})+\Tr_{n-k}(s_{i})^{q^{k}}=\Tr_n(s_i)=\Tr_{n-k}(s_{i})\cdot \frac{\Tr_n(u_{i})-(a+b)}{\Tr_{n-k}(u_{i})-a}.$$
\end{proof}

\begin{lemma}[Shifting lemma]
\label{shiftlemma}
If $s_{i}$ is a root of $f_{i}(T)$, then
$$s_{i+1}:=\left(\frac{\Tr_{k}(s_{i})}{\Tr_{k}(u_{i})-b}\right)^q-\left(\frac{\Tr_{k}(s_{i})}{\Tr_{k}(u_{i})-b}\right)\in \mathbb F_\ell(u_i,s_i)$$
is a root of $f_{i+1}(T)$.
\end{lemma}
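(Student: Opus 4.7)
The plan is to introduce $w := \Tr_{k}(s_i)/(\Tr_{k}(u_i)-b)$, so that $s_{i+1} = w^q - w$, and then to exploit the fact that both $\Tr_n$ and $\Tr_{n-k}$ are $q$-additive with coefficients in $\F_q$. Applying either of these to a Frobenius-difference $w^q - w$ produces a telescoping sum, giving
\begin{align*}
\Tr_{n-k}(s_{i+1}) &= \Tr_{n-k}(w^q - w) = w^{q^{n-k}} - w,\\
\Tr_n(s_{i+1}) &= \Tr_n(w^q - w) = w^{q^n} - w = (w^{q^{n-k}})^{q^k} - w.
\end{align*}
The identity $w^{q^{n-k}} = \Tr_{n-k}(s_i)/(\Tr_{n-k}(u_i)-a)$ supplied by Equation \eqref{identity1} of Lemma \ref{lem:prep} then turns both traces into explicit rational expressions in the four quantities $A := \Tr_{k}(s_i)$, $B := \Tr_{k}(u_i)-b$, $C := \Tr_{n-k}(s_i)$ and $D := \Tr_{n-k}(u_i)-a$.

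With these expressions in hand, I would verify $f_{i+1}(s_{i+1}) = 0$ by direct substitution. Setting $E := \Tr_n(u_i) - (a+b)$ and noting that $E = B + D^{q^k}$ (using $a \in \F_p$, so $D^{q^k} = \Tr_{n-k}(u_i)^{q^k} - a$, together with $\Tr_n = \Tr_k + \Tr_{n-k}^{q^k}$), the defining condition $f_{i+1}(s_{i+1}) = 0$ becomes
$$\frac{C^{q^k}}{D^{q^k}} - \frac{A}{B} \;=\; \left(\frac{C}{D} - \frac{A}{B}\right)\cdot \frac{E}{D^{q^k}}.$$
Clearing denominators and cancelling reduces this to the single polynomial identity $DC^{q^k} - CD^{q^k} = CB - AD$, which is precisely what one obtains from Equation \eqref{identity2} by multiplying through by $D$ and substituting $E - D^{q^k} = B$.

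The main obstacle is purely the algebraic bookkeeping rather than anything conceptually delicate: the two identities in Lemma \ref{lem:prep} are engineered so that the telescoping of $\Tr_n$ and $\Tr_{n-k}$ across $w^q - w$, combined with the known value of $w^{q^{n-k}}$, matches exactly with the coefficient $(\Tr_n(u_i)-(a+b))/(\Tr_{n-k}(u_i)^{q^k}-a)$ appearing in $f_{i+1}$. Containment $s_{i+1}\in\F_\ell(u_i,s_i)$ is automatic from the formula defining $s_{i+1}$.
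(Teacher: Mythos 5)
Your proposal is correct and follows essentially the same route as the paper: both proofs exploit the telescoping of $\Tr_{n-k}$ and $\Tr_n$ across $w^q - w$, use Equation \eqref{identity1} to replace $w^{q^{n-k}}$, and reduce the verification to Equation \eqref{identity2} together with the observation $\Tr_n(u_i)-(a+b)=(\Tr_k(u_i)-b)+(\Tr_{n-k}(u_i)-a)^{q^k}$. The paper organizes this as a chain of equalities computing $\Tr_n(s_{i+1})$ directly, whereas you introduce the abbreviations $A,B,C,D,E$, compute both traces, and clear denominators; the underlying algebra is the same.
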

\begin{proof}
\begin{align*}
\Tr_n(s_{i+1})&=\left(\frac{\Tr_{k}(s_{i})}{\Tr_{k}(u_i)-b}\right)^{q^n}-\left(\frac{\Tr_{k}(s_{i})}{\Tr_{k}(u_i)-b}\right)\\
&=\left(\frac{\Tr_{n-k}(s_{i})}{\Tr_{n-k}(u_i)-a}\right)^{q^{k}}-\frac{\Tr_{k}(s_{i})}{\Tr_{k}(u_i)-b}  &\text { by Equation \eqref{identity1}}\\
&=\frac{\Tr_{n-k}(s_{i})\cdot \frac{\Tr_n(u_{i})-(a+b)}{\Tr_{n-k}(u_{i})-a}-\Tr_{k}(s_{i})}{\Tr_{n-k}(u_i)^{q^{k}}-a}-\frac{\Tr_{k}(s_{i})}{\Tr_{k}(u_i)-b} &\text { by Equation \eqref{identity2}}\\
&=\frac{\Tr_n(u_{i})-(a+b)}{\Tr_{n-k}(u_{i})^{q^{k}}-a}\cdot \left[\frac{\Tr_{n-k}(s_{i})}{\Tr_{n-k}(u_{i})-a}-\frac{\Tr_{k}(s_{i})}{\Tr_{k}(u_i)-b}\right]\\
&=\frac{\Tr_n(u_{i})-(a+b)}{\Tr_{n-k}(u_{i})^{q^{k}}-a}\cdot \left[\left(\frac{\Tr_{k}(s_{i})}{\Tr_{k}(u_{i})-b}\right)^{q^{n-k}}-\frac{\Tr_{k}(s_{i})}{\Tr_{k}(u_i)-b}\right]& \text { by Equation \eqref{identity1}}\\
&=\frac{\Tr_n(u_{i})-(a+b)}{\Tr_{n-k}(u_{i})^{q^{k}}-a}\cdot \Tr_{n-k}(s_{i+1}).\\
\end{align*}
Now we see from Equation~\eqref{s2} that
$$f_{i+1}(s_{i+1})=\Tr_n(s_{i+1})-\frac{\Tr_n(u_{i})-(a+b)}{\Tr_{n-k}(u_{i})^{q^{k}}-a}\cdot \Tr_{n-k}(s_{i+1})=0.$$
\end{proof}

We have now established that each root of $f_i(T)$ together with $u_i$ generates a root of $f_{i+1}(T)$. Let $V_i$ (respectively $V_{i+1}$) be the set of roots of $f_i(T)$ (respectively $f_{i+1}(T)$). Since $f_i(T)$ and $f_{i+1}(T)$ are separable and $q$-additive, $V_i$ and $V_{i+1}$ are $(n-1)$-dimensional $\mathbb F_q$-vector spaces. By Lemma~\ref{shiftlemma},
\begin{eqnarray*}
\varphi:V_i & \rightarrow &V_{i+1}\\
s & \mapsto &\left(\frac{\Tr_{k}(s)}{\Tr_{k}(u_i)-b}\right)^q-\left(\frac{\Tr_{k}(s)}{\Tr_{k}(u_i)-b}\right)
\end{eqnarray*}
is a map from $V_i$ to $V_{i+1}$.
Because $\varphi$ is $q$-additive in $s$, it is in fact an $\mathbb F_q$-vector space homomorphism. In fact, it will turn out that $\varphi$ is a bijection.

\begin{lemma}\label{lem:phiinjective}
The map $\varphi: \ V_i\rightarrow V_{i+1}$ defined above is a bijection.
\end{lemma}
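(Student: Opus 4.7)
The plan is to exploit that $\varphi$ is an $\mathbb F_q$-linear map between $\mathbb F_q$-vector spaces of equal dimension $n-1$, so showing $\varphi$ is injective suffices. Concretely, I will show that if $\varphi(s)=0$ for some $s\in V_i$, then $s$ must be an $\mathbb F_q$-multiple of $u_i$, and then show that no nonzero multiple of $u_i$ lies in $V_i$.

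First I would set $c:=\Tr_k(s)/(\Tr_k(u_i)-b)$. The vanishing of $\varphi(s)$ says exactly that $c^q=c$, so $c\in\mathbb F_q$. Next I apply Equation~\eqref{identity1} of Lemma~\ref{lem:prep}, which gives
$$c^{q^{n-k}}=\frac{\Tr_{n-k}(s)}{\Tr_{n-k}(u_i)-a}.$$
Since $c\in\mathbb F_q$, the left side equals $c$, so $\Tr_{n-k}(s)=c(\Tr_{n-k}(u_i)-a)$ as well. Setting $s':=s-c\,u_i$ and using $\mathbb F_q$-linearity of the trace polynomials (plus $a,b\in\mathbb F_p$), I obtain $\Tr_k(s')=-cb$ and $\Tr_{n-k}(s')=-ca$, both of which lie in $\mathbb F_q$.

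Because $\gcd(k,n-k)=1$, Lemma~\ref{lem1}(ii) together with the explicit formula~\eqref{expleuclid} recovers $s'=\Tr_1(s')$ from $\Tr_k(s')$ and $\Tr_{n-k}(s')$: plugging in the constants $-cb$ and $-ca$ and using that raising elements of $\mathbb F_q$ to $q$-th powers does nothing, the sum telescopes to $s'=-abc-(-abc)^q=0$. Hence $s=c\,u_i$.

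It remains to rule out nonzero $c$. A direct computation of $f_i(cu_i)=c(-z_i^{-1}\Tr_{n-k}(u_i)+\Tr_k(u_i)^{q^{n-k}})$, together with the defining relation $\Tr_{n-k}(u_i)-z_i\Tr_k(u_i)^{q^{n-k}}=a-z_i b$ for $u_i$, simplifies to $f_i(cu_i)=c(b-az_i^{-1})$. Because $z_i$ is transcendental over $\mathbb F_q$ and $(a,b)\neq (0,0)$ by Equation~\eqref{eq1}, the factor $b-az_i^{-1}$ is nonzero, forcing $c=0$ and hence $s=0$. I expect the only mildly delicate step to be the telescoping argument in the middle paragraph; once one commits to introducing $s'=s-cu_i$, the identities $\Tr_k(s'),\Tr_{n-k}(s')\in\mathbb F_q$ and the explicit Euclidean formula combine cleanly to give $s'=0$.
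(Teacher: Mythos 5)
Your proof is correct, but it takes a genuinely different route from the paper's. Both arguments open identically: setting $c := \Tr_k(s)/(\Tr_k(u_i)-b)$, the vanishing of $\varphi(s)$ forces $c\in\F_q$, and Equation~\eqref{identity1} then yields $\Tr_{n-k}(s) = c(\Tr_{n-k}(u_i)-a)$. From there the two proofs diverge. The paper applies $\Tr_{n-k}$ to the first relation and $\Tr_k$ to the second, subtracts, and uses the commutativity $\Tr_{n-k}\circ\Tr_k = \Tr_k\circ\Tr_{n-k}$ of Lemma~\ref{lem1} to get $0 = c\bigl(ak-b(n-k)\bigr) = c$ in one stroke; with $c=0$ both traces of $s$ vanish and Equation~\eqref{expleuclid} gives $s=0$. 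You instead set $s' := s - cu_i$, observe that $\Tr_k(s')=-cb$ and $\Tr_{n-k}(s')=-ca$ lie in $\F_q$, and apply \eqref{expleuclid} directly to conclude $s'=0$ (the expression evaluates to $-abc-(-abc)^q=0$; calling this ``telescoping'' is a slight misnomer, but the computation is right). This leaves you the extra step of showing that $s=cu_i$ with $c\neq 0$ cannot lie in $V_i$, which you handle by computing $f_i(cu_i)=c\bigl(b-az_i^{-1}\bigr)$ from the defining relation of $u_i$ and invoking the transcendence of $z_i$ together with $(a\bmod p, b\bmod p)\neq(0,0)$, which follows from $ak-b(n-k)=1$. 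Both arguments are sound. The paper's is a bit shorter because $c=0$ falls out immediately from the commutativity identity; yours is arguably more transparent in that it isolates the potential kernel as the line $\F_q u_i$ and then rules it out explicitly, and it uses only the piece of Lemma~\ref{lem1} already encoded in \eqref{expleuclid} rather than the commutativity statement.
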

\begin{proof}
It is sufficient to show that $\rm{ker}(\varphi)=\{0\}$. Let $s\in V_i$, i.e., $f_i(s)=0$. If
$$\varphi(s)=\left(\frac{\Tr_{k}(s)}{\Tr_{k}(u_i)-b}\right)^q-\left(\frac{\Tr_{k}(s)}{\Tr_{k}(u_i)-b}\right)=0,$$
then $\Tr_{k}(s)/(\Tr_{k}(u_i)-b)\in \mathbb F_q,$
implying that there exists $\alpha \in \F_q$ such that
\begin{equation}
\label{fact1}
\Tr_{k}(s)=\alpha(\Tr_{k}(u_i)-b).
\end{equation}
By Equation \eqref{identity1}, we then have
$$\frac{\Tr_{n-k}(s)}{\Tr_{n-k}(u_{i})-a}=\left(\frac{\Tr_{k}(s)}{\Tr_{k}(u_{i})-b}\right)^{q^{n-k}}=\alpha^{q^{n-k}}=\alpha,$$
so
\begin{equation}
\label{fact2}
\Tr_{n-k}(s)=\alpha(\Tr_{n-k}(u_{i})-a).
\end{equation}
Equations \eqref{fact1} and \eqref{fact2} imply that
$$\Tr_{n-k}(\Tr_k(s))=\alpha \Tr_{n-k}(\Tr_k(u_i)-b)=\alpha\Tr_{n-k}(\Tr_k(u_i))-\alpha \cdot b\cdot (n-k)$$
and
$$\Tr_{k}(\Tr_{n-k}(s))=\alpha \Tr_{k}(\Tr_{n-k}(u_i)-a)=\alpha\Tr_{k}(\Tr_{n-k}(u_i))-\alpha \cdot a\cdot k.$$
Using the above and Lemma \ref{lem1} we obtain
\begin{eqnarray*}
0&=&\Tr_{n-k}(\Tr_k(s))-\Tr_{k}(\Tr_{n-k}(s))\\
&=&\alpha\left(\Tr_{n-k}(\Tr_k(u_i))-\Tr_{k}(\Tr_{n-k}(u_i))+a\cdot k -b\cdot (n-k)\right)\\
&=&\alpha(a\cdot k -b\cdot (n-k))=\alpha.
\end{eqnarray*}
In the last step we used Equation \eqref{eq1}. Equations \eqref{fact1} and \eqref{fact2} now imply that $\Tr_{n-k}(s)=0$ and $\Tr_k(s)=0$. Using Equation \eqref{expleuclid} we conclude that $s=0$.
\end{proof}

By the shifting lemma (Lemma \ref{shiftlemma}) and Lemma \ref{lem:phiinjective} all roots of $f_i(T)$ together with $u_i$ generate all roots of $f_{i+1}(T)$. Similarly all roots of $f_{i+1}(T)$ together with $u_{i+1}$ generate all roots of $f_{i+2}(T)$, etc. So, lifting the tower $\mathcal H$ by the splitting field of $f_1(T)$ gives a tower with Galois steps (see also Proposition \ref{prop:galoisclosurestep1}). More formally, denote by $M$ be the splitting field of $f_1(T)$ over $H_1$ and define $H_i'=M\cdot H_i$ for $i\geq 2$. Then we consider the tower $\mathcal{H}'=(H_1 \subset H_2' \subset H_3' \subset \cdots)$. Note that by Remark \ref{rem:splittingingalois}, the tower $\mathcal{H}'$ is a subtower of $\mathcal{N}$ and moreover $N_2=H_2'$. Note also that all roots of $f_i(T)$ belong to $H_i'$.
\begin{proposition}
\label{galgroups}
\begin{enumerate}
\item All steps in the tower $\mathcal{H}'$ are Galois.
\item The Galois group of the extension $H_2'/H_1$ is an extension by an elementary abelian $p$-group of a subgroup of $\mathrm{GL}_{n-1}(\F_q)$.
\item For each $i>1$, the extension $H_{i+1}'/H_i'$ is an elementary abelian $p$-extension.
\end{enumerate}
\end{proposition}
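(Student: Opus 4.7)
The plan is to trade the tower $\mathcal{H}'$ for the much easier description $H_{i+1}' = H_i'(u_i)$ and then exploit the very rigid equation $u_i$ satisfies over $H_i'$. Indeed, Equation \eqref{polui} says immediately that $u_i$ is a root of
\[
f_i(T) - c_i \in H_i'[T], \qquad c_i := -a/z_i + b,
\]
with $c_i \in H_i \subset H_i'$. Since $f_i$ is $q$-additive and separable, the full root set of $f_i(T) - c_i$ is the coset $u_i + V_i$, where $V_i$ is the $(n-1)$-dimensional $\F_q$-vector space of roots of $f_i$. By the observation immediately preceding the proposition, all of $V_i$ is already contained in $H_i'$, so $H_i'(u_i)$ automatically contains every root of $f_i(T) - c_i$. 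Thus $H_{i+1}'/H_i'$ is normal, hence Galois; and for each $\sigma \in \mathrm{Gal}(H_{i+1}'/H_i')$ the element $v_\sigma := \sigma(u_i) - u_i$ lies in $V_i$, and the map $\sigma \mapsto v_\sigma$ is an injective group homomorphism into $(V_i,+)$. So $\mathrm{Gal}(H_{i+1}'/H_i')$ embeds into the elementary abelian $p$-group $V_i \cong \F_q^{n-1}$. This proves claim (3), and simultaneously the Galois property of each step $H_{i+1}'/H_i'$ with $i \geq 2$ in claim (1).

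For the base step $H_2'/H_1$ in claim (1), I would invoke Proposition \ref{prop:galoisclosurestep1} directly: by definition $H_2' = M \cdot H_2$, so $H_2'$ \emph{is} the Galois closure of $H_2/H_1$ and is in particular Galois over $H_1$.

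For claim (2), I would analyze the chain $H_1 \subset M \subset H_2'$. The lower extension $M/H_1$ is Galois, being the splitting field of $f_1(T)$; since $f_1$ is $q$-additive and separable with root set $V_1$ an $\F_q$-vector space of dimension $n-1$, every automorphism of $M/H_1$ preserves $V_1$ and acts on it $\F_q$-linearly, yielding an embedding
\[
\mathrm{Gal}(M/H_1) \hookrightarrow \mathrm{GL}_{\F_q}(V_1) \cong \mathrm{GL}_{n-1}(\F_q).
\]
The upper extension $H_2' = M(u_1)$ over $M$ is elementary abelian by the same argument used for claim (3), applied with $i=1$ and with $H_1'$ replaced by $M$; the needed containment $V_1 \subset M$ holds by the very definition of $M$ as the splitting field of $f_1$. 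Assembling the two pieces via the standard short exact sequence
\[
1 \longrightarrow \mathrm{Gal}(H_2'/M) \longrightarrow \mathrm{Gal}(H_2'/H_1) \longrightarrow \mathrm{Gal}(M/H_1) \longrightarrow 1
\]
displays $\mathrm{Gal}(H_2'/H_1)$ as an extension of a subgroup of $\mathrm{GL}_{n-1}(\F_q)$ by the elementary abelian $p$-group $\mathrm{Gal}(H_2'/M)$, which is precisely the form required by (2).

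The only delicate point is the containment $V_i \subset H_i'$, since the normality of $H_{i+1}'/H_i'$ rests entirely on it. But this is exactly what the Shifting Lemma \ref{shiftlemma} together with the bijectivity of $\varphi$ (Lemma \ref{lem:phiinjective}) is designed to give, and it has already been recorded in the remark just before the proposition. Everything else in the proof is routine bookkeeping of the short exact sequence and of the $\F_q$-linear action on $V_1$.
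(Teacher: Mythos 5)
Your proof is correct and follows essentially the same route as the paper's: Proposition~\ref{prop:galoisclosurestep1} handles $H_2'/H_1$, the splitting field $M$ of the $q$-additive polynomial $f_1(T)$ gives the $\mathrm{GL}_{n-1}(\F_q)$ part, and the observation that $V_i\subset H_i'$ (resp.\ $V_1\subset M$) makes each upper step an Artin--Schreier-type elementary abelian $p$-extension via the coset $u_i+V_i$. You simply spell out more explicitly the embedding $\sigma\mapsto\sigma(u_i)-u_i$ into $(V_i,+)$ and the short exact sequence for part (2), both of which the paper leaves implicit.
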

\begin{proof}
By Proposition \ref{prop:galoisclosurestep1} the field $H_2'=M\cdot H_2$ is a Galois extension of $H_1$. The extension $M/H_1$, being the splitting field of the $q$-additive polynomial $f(T)$ of degree $q^{n-1}$, is Galois with Galois group a subgroup of $\mathrm{GL}_{n-1}(\F_q)$. Since $H_2=H_1(u_1)$, $u_1$ is a root of $f(T)+az_1^{-1}-b$ and $M$ contains all roots of the additive polynomial $f(T)$, the Galois group of $H_2'/M$ is an elementary abelian $p$-group. This proves the second part of the proposition. Similarly, since $H_{i+1}'=H_i'(u_{i})$, $u_{i}$ is a root of $f_{i}(T)+az_{i}^{-1}-b$ and $H_{i}'$ contains all roots of $f_{i}(T)$, the extension $H_{i+1}'/H_i'$ for each $i>1$ is Galois with an elementary abelian $p$-group as Galois group.\end{proof}

\begin{remark}
Note that the tower $(H_2' \subset H_3'\subset \cdots)$ is a recursive tower whose steps are $2$-bounded elementary abelian $p$-extensions (starting at a non-rational function field). Let $E:=M(x_1)$. The composite $E\cdot {\mathcal F}=(E\cdot F_1 \subset E\cdot F_2\subset \cdots )$ is then also a tower whose steps are weakly ramified elementary abelian $p$-extensions. Since both towers are subtowers of $\mathcal{N}$, the bound from Corollary \ref{cor:limit} applies.

\end{remark}

\begin{remark}\label{rem:dualshift}
The very same reasoning applies to a dual tower, by replacing $k$ and $b$ by $n-k$ and $a$, respectively. So a modified version of the shifting lemma and of Proposition~\ref{galgroups} apply in the dual direction.
\end{remark}

The splitting fields over $H_1$ of the polynomials $f_1(T)=f(T)$ and $g(T)$ from Equations~\eqref{hom} and \eqref{homd} are the same. Combining this with Lemma \ref{shiftlemma} and Remark \ref{rem:dualshift}, we see that after adjoining the roots of $f(T)$ to $\F_\ell(z_1)$, all extensions $M(u_{-i},\dots,u_1)/M(u_{-(i-1)},\dots,u_1)$ become Galois. Note that allowing indices $i\leq 0$ in Equation~\eqref{tow2} corresponds to a dual tower.

Since $H_i \subseteq H_i'=M\cdot H_i\subseteq N_i$ for $i>1$, it follows that the Galois closure of $H'_i/H_1$ is given by $N_i$ (the Galois closure of the tower $\mathcal H'$ is the tower $\mathcal{N}$). This observation enables us to describe the Galois group of $N_i/N_1$ and to determine the ramification in the extensions $H'_{i+1}/H'_i$.
The Galois closure of $H'_i/H_1$ is obtained by taking the composite over $H_1$ of $\sigma(H'_i)$ where $\sigma$ runs over all embeddings over $H_1$ of $H'_i$ into a separable closure of $H_1$. Since $H'_2/H_1$ is Galois, we have $\sigma(H_2')=H_2'$ and hence this amounts to taking the composite over $H'_2$ of the $\sigma(H'_i)$. Since all extensions $\sigma(H'_i)/H'_2$ are stepwise Galois $p$-extensions, we see that the extension $N_i/H'_2$ is a Galois $p$-extension.

\bigskip
So we have:
\begin{proposition} \label{rem:groups} The Galois group of $N_i/N_1$ is an extension of a subgroup of $\mathrm{GL}_{n-1}(\F_q)$ by a $p$-group.
\end{proposition}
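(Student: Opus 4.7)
The plan is to exploit the filtration $N_1 = H_1 \subseteq M \subseteq H_2' = N_2 \subseteq N_i$ and assemble the structure purely group-theoretically. Each of these intermediate fields is normal over $N_1$: $M/N_1$ is Galois as a splitting field of the $q$-additive polynomial $f(T)$, and $N_i/N_1$ is Galois by definition, so $N_i/M$ is Galois too and the three intermediate Galois groups fit into the usual short exact sequences of extension groups.

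The crucial step is to show that $\mathrm{Gal}(N_i/M)$ is a $p$-group. For this I would interpose $H_2'$ and invoke the exact sequence
$$1 \longrightarrow \mathrm{Gal}(N_i/H_2') \longrightarrow \mathrm{Gal}(N_i/M) \longrightarrow \mathrm{Gal}(H_2'/M) \longrightarrow 1.$$
The right-hand factor is elementary abelian of exponent $p$ by part (2) of Proposition \ref{galgroups}, while the left-hand factor $\mathrm{Gal}(N_i/H_2') = \mathrm{Gal}(N_i/N_2)$ is a $p$-group by the observation made immediately before the statement, namely that $N_i/N_2$ is obtained by composing the Galois $p$-extensions $\sigma(H_i')/H_2'$. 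Since an extension of a $p$-group by a $p$-group is again a $p$-group, $\mathrm{Gal}(N_i/M)$ is a $p$-group.

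To finish I would assemble the outer sequence
$$1 \longrightarrow \mathrm{Gal}(N_i/M) \longrightarrow \mathrm{Gal}(N_i/N_1) \longrightarrow \mathrm{Gal}(M/N_1) \longrightarrow 1,$$
whose exactness is justified by the normality of $M/N_1$. The kernel is a $p$-group by the previous paragraph, and the quotient $\mathrm{Gal}(M/N_1) = \mathrm{Gal}(M/H_1)$ embeds into $\mathrm{GL}_{n-1}(\F_q)$ because $M$ is the splitting field over $H_1$ of a separable $q$-additive polynomial of $q$-degree $n-1$, so its Galois group acts faithfully and $\F_q$-linearly on the $(n-1)$-dimensional $\F_q$-vector space of roots of $f(T)$. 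This exhibits $\mathrm{Gal}(N_i/N_1)$ as the required extension. I do not anticipate a genuine obstacle: the argument is essentially a bookkeeping step that splices together Proposition \ref{galgroups} with the $p$-extension property of $N_i/N_2$ noted just before the statement, and the only thing to be careful about is that both intermediate subfields $M$ and $H_2'$ are normal over $H_1$ so that the kernels and quotients line up as claimed.
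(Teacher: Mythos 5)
Your proof is correct and follows essentially the same route as the paper: the paper records that $N_i/H_2'$ is a Galois $p$-extension and then combines this with Proposition~\ref{galgroups}(2), whereas you simply make explicit the filtration $H_1\subseteq M\subseteq H_2'\subseteq N_i$ and the two short exact sequences that the paper leaves implicit.
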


We can now determine the ramification behavior in the extensions $H'_{i+1}/H'_i$, $i>1$. We have $N_{i+1}\supseteq H'_{i+1}\supseteq H'_i\supseteq H'_2$. Since the extension $N_{i+1}/H'_2$ is a $p$-extension, so is the extension $N_{i+1}/H'_i$. Moreover $N_{i+1}/H'_i$ is weakly ramified, hence $2$-bounded by Remark~\ref{remark:ram}. The $2$-boundedness of $H'_{i+1}/H'_i$ now follows from \cite[Proposition 10]{galclos}. Hence we obtain the following

\begin{proposition} For all $i>1$, the steps $H'_{i+1}/H'_i$ are $2$-bounded Galois $p$-extensions.
\end{proposition}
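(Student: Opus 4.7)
The plan is to use the chain of inclusions $H'_2 \subseteq H'_i \subseteq H'_{i+1} \subseteq N_{i+1}$ and transfer properties from the ambient extension $N_{i+1}/H'_i$ down to the sub-extension $H'_{i+1}/H'_i$. Since Proposition \ref{galgroups}(3) already establishes that $H'_{i+1}/H'_i$ is Galois with an elementary abelian $p$-group as Galois group, the only thing left to check is the $2$-boundedness.

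First I would verify that $N_{i+1}/H'_i$ itself is a $2$-bounded $p$-extension. The $p$-extension property is inherited from $N_{i+1}/H'_2$, which was established in the paragraph preceding Proposition \ref{rem:groups}; indeed, since $H'_2 \subseteq H'_i \subseteq N_{i+1}$, the sub-extension $N_{i+1}/H'_i$ is still a $p$-extension. For the weak ramification, Remark \ref{remark:ram} tells us that $N_{i+1}/H_1$ is weakly ramified, and weak ramification is inherited by any intermediate extension $N_{i+1}/H'_i$ (second ramification groups in a sub-extension are contained in the second ramification groups of the ambient extension). By the discussion in Section \ref{sec:one}, a weakly ramified $p$-extension is automatically $2$-bounded, and we conclude that $N_{i+1}/H'_i$ is $2$-bounded.

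Finally, I would descend $2$-boundedness from $N_{i+1}/H'_i$ to the sub-extension $H'_{i+1}/H'_i$ by appealing to \cite[Proposition 10]{galclos}, which guarantees that a Galois sub-extension of a $2$-bounded Galois extension is itself $2$-bounded. The hypotheses are satisfied here: $N_{i+1}/H'_i$ is Galois (since $N_{i+1}/H_1$ is Galois by construction and $H'_i$ lies between them), and $H'_{i+1}/H'_i$ is Galois by Proposition \ref{galgroups}(3). The main obstacle, if any, is really just bookkeeping — making sure the inclusion $H'_{i+1} \subseteq N_{i+1}$ (which follows from $H'_i \subseteq N_i \subseteq N_{i+1}$ together with the recursive construction of $\mathcal H'$) is invoked correctly, so that the descent lemma applies. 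Once that is in place, the conclusion $d(P) = 2(e(P)-1)$ for every place of $H'_i$ and its extensions in $H'_{i+1}$ follows immediately.
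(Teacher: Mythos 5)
Your proof is correct and follows essentially the same route as the paper: both use the chain $H'_2 \subseteq H'_i \subseteq H'_{i+1} \subseteq N_{i+1}$ to establish that $N_{i+1}/H'_i$ is a weakly ramified (hence $2$-bounded) $p$-extension, and then descend to $H'_{i+1}/H'_i$ via \cite[Proposition 10]{galclos}. The only difference is that you spell out some of the inheritance steps (of weak ramification and of the $p$-extension property to intermediate sub-extensions) that the paper leaves implicit.
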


Collecting all results above, we finish the proof of Theorem \ref{thm:collecting}.

%

\noindent Alp Bassa\\
Sabanc{\i} University, MDBF\\
{\rm 34956} Tuzla, \.Istanbul, Turkey\\
bassa@sabanciuniv.edu\\

\noindent Peter Beelen\\
Technical University of Denmark, Department of Applied Mathematics and Computer Science\\
Matematiktorvet, Building 303B\\
DK-2800, Lyngby, Denmark\\
p.beelen@mat.dtu.dk\\

\noindent
Arnaldo Garcia\\
Instituto Nacional de Matem\'atica Pura e Aplicada, IMPA\\
Estrada Dona Castorina 110\\
22460-320, Rio de Janeiro, RJ, Brazil\\
garcia@impa.br\\

\noindent Henning Stichtenoth\\
Sabanc{\i} University, MDBF\\
{\rm 34956} Tuzla, \.Istanbul, Turkey\\
henning@sabanciuniv.edu\\

\end{document}